\font\sml=cmr6
\newcommand{\K}{{\mathbf K}^\crit}
\newcommand{\RR}{\mathbb{R}}
\newcommand{\ZZ}{{\mathbb Z}}
\newcommand{\NN}{{\mathbb N}}
\newcommand{\R}{{\mathbb R}}
\newcommand{\N}{{\mathbb N}}
\newcommand{\E}{{\mathbf E}\,}
\newcommand{\half}{{\frac{1}{2}}}
\newcommand{\hcal}{\mathcal{H}}
\newcommand{\mcal}{\mathcal{M}}
\newcommand{\pcal}{\mathcal{P}}
\def    \half   {{\frac{1}{2}}}
\def    \R  {{\mathbb R}}
 \def   \half   {{\frac{1}{2}}}
\newtheorem{theo}{{\sc Theorem}}[section]
\newtheorem{cor}[theo]{{\sc Corollary}}
\newtheorem{remark}[theo]{{\sc Remark}}
\newtheorem{lem}[theo]{{\sc Lemma}}
\newtheorem{prop}[theo]{{\sc Proposition}}
\newtheorem{definition}[theo]{{\sc Definition}}
\newenvironment{defin-no-number}{\medskip\noindent{\it Definition:\/} }{\medskip}
\def\h#1{\hbox{#1}}
\def\o{\omega}
\def\K{K\"ahler }
\def\ra{\rightarrow}
\def\text{\textstyle}
\def\ra{\rightarrow}
\def\calP{\pcal}
\def\calM{\mcal}
\def\calH{\hcal} 
\def\calV{{\mathcal{V}}}
\def\diam{{\operatorname{diam}}}
\def\ginv{g^{-1}}
\def\half{\hbox{$\textstyle\frac12$}}
\def\quarter{\hbox{$\textstyle\frac14$}}
\def\eigth{\hbox{$\textstyle\frac18$}}
\def\sixtienth{\hbox{$\textstyle\frac1{16}$}}
\def\quartern{\hbox{$\textstyle\frac n4$}}
\def\sixtienthn{\hbox{$\textstyle\frac n{16}$}}
\def\nsixtienth{\hbox{$\textstyle\frac n{16}$}}
\font\sml=cmr5
\def\gNE{g_{\hbox{\sml N}}}
\def\gE{g_{\hbox{\sml E}}}
\def\dE{d_{\hbox{\sml E}}}
\def\LE{L_{\hbox{\sml E}}}
\def\gN{g_{\hbox{\sml N}}}
\def\dN{d_{\hbox{\sml N}}}
\def\NE#1#2{(#1,#2)_{\hbox{\sml N}}}
\def\N#1#2{(#1,#2)_{\hbox{\sml N}}}
\def\E#1#2{(#1,#2)_{\hbox{\sml E}}}
\def\NEsq#1{|#1|^2_{\hbox{\sml N}}}
\def\Npsquared#1#2{{(#1,#2)^2_p}}
\def\Nfsquared#1#2{{(#1,#2)^2_f}}
\def\Esq#1{|#1|^2_{\hbox{\sml E}}}
\def\KN{\hbox{$\bigcirc\mskip-14.8mu\wedge\;$}}
\def\Esml{\hbox{\sml E}}
\def\calMEbar{\overline{\calM_{\hbox{\sml E}}}}
\def\DiffM{\hbox{\rm Diff}(M)}
\def\KE{K\"ahler--Einstein }
\DeclareMathOperator{\Vol}{Vol}
\DeclareMathOperator{\tr}{tr}
\providecommand{\abs}[1]{\left\lvert #1 \right\rvert}
\providecommand{\integral}[4]{\int_{#1}^{#2} #3 \, #4}
\providecommand{\Mf}{\mathcal{M}_f}
\providecommand{\Mfhat}{\widehat{\Mf}}
\providecommand{\Mfplus}{\mathcal{M}_{f+}}
\providecommand{\Mfplushat}{\widehat{\Mfplus}}
\def\normN#1{|#1|_{\hbox{\sml N}}}
\title[Metrics on the space of Riemannian metrics
]
{Conformal deformations of the Ebin metric and a generalized Calabi metric
on the space of Riemannian metrics
}
\author{Brian Clarke}
\author{Yanir A. Rubinstein }
\address{Department of Mathematics, Stanford University, Stanford, CA 94305, USA}
\email{bfclarke@stanford.edu, yanir@member.ams.org}
\thanks{\hglue-10pt April 21, 2011.}
\begin{document}

\begin{abstract}
We consider geometries on the space of Riemannian
metrics conformally equivalent to the widely
studied Ebin $L^2$ metric.
Among these we characterize a distinguished metric that
can be regarded as a generalization of Calabi's
metric on the space of K\"ahler metrics to the space of
Riemannian metrics, and we study its geometry in detail. Unlike
the Ebin metric, the geodesic equation involves non-local terms,
and we solve it explicitly by using a constant of the motion.
We then determine its completion, which gives the first example
of a metric on the space of Riemannian metrics whose completion
is strictly smaller than that of the Ebin metric.
\end{abstract}

\maketitle

\bigskip
\section{Introduction}

Let $M$ be an $n$-dimensional compact closed manifold, and consider
the infinite-dimensional space $\calM$ of all smooth 
Riemannian metrics on $M$. The space $\calM$ is 
endowed with a natural $L^2$-type Riemannian structure, 
the Ebin metric \cite{E}, 
\begin{equation}\label{EbinMetDef}
\gE(h,k)|_g:= \E h k := \int_M \tr (\ginv h \ginv k) dV_g,
\end{equation}
where $g\in\calM$, $h,k\in T_g\calM$, $T_g\calM$ may be identified
with the space $\Gamma(S^2 T^* M)$ of smooth symmetric (0,2)-tensor
fields on $M$, and $g^{-1} h$ represents the $(1,1)$-tensor dual to
$h$ with respect to $g$.  This metric has received much attention
since being introduced in the 1960s, see, e.g., \cite{FG,GM,Cl1,Cl5},
and has found various applications, for example in the Weil--Petersson
geometry of moduli spaces of Riemann surfaces \cite{FT,T}
and in the study of the moduli space $\calM/\DiffM$ of Riemannian
structures (e.g.,~\cite{E,F,Bo}).  A related pseudo-Riemannian metric,
the DeWitt metric \cite{D,P}, has been used in the Hamiltonian
formulation of general relativity.

Recently, the metric completion of $\calMEbar$ of $(\calM,\gE)$ has been determined \cite{Cl3},
and it was shown by means of examples that convergence in $\calMEbar$
is too weak to control any geometric quantities or to imply
geometric convergence of any sort (e.g., Gromov--Hausdorff convergence) \cite{Cl4}.
Therefore, it seems natural to look for other metrics on $\calM$
with the property that their metric completions are stricly contained
in $\calMEbar$. In other words, metrics for which certain types
of degenerations are excluded along convergent sequences.
One purpose of this article is to take a first step in this direction
by studying conformal deformations of
the Ebin metric in the search for metrics with this and 
other distinguished properties. 

Our first observation (Proposition
\ref{VFactorProp}) is that there is a distinguished metric
in the conformal class characterized by the property that
the tautological vector field $X|_g=g$ on $\calM$ is parallel.
This metric, which we call the {\it generalized Calabi metric} (or sometimes
the {\it normalized Ebin metric}), is given 
by
$$
\gN:=\frac1{V_g}\gE, \quad g\in \calM,
$$
where $V_g:=\Vol(M,g)$ is the volume function on $\calM$.
We then restrict attention to conformal factors that
depend on the volume, i.e., metrics on $\calM$ of the form
$e^{2f(V_g)}\gE$, with $f$ a smooth function on $\RR_{>0}$,
and mostly to the metrics $g_p:=\gE/V^p$, which serve as the basic
models within this family, as they capture the possible
degenerations of manifolds in terms of either volume collapse
or blow-up. By studying this family of metrics, we then show
that $\gN$ has the smallest metric completion (Theorem \ref{dpCpl}), 
and in particular one that is smaller than that of the Ebin metric.
This provides the first example of an $L^2$-type metric on $\calM$ 
whose metric completion is strictly smaller than that of the
Ebin metric.

An additional motivation for introducting $\gN$ comes from
the study of the subspace of \K metrics $\calH\subset\calM$ 
in a fixed \K class (when $M$ admits a \K structure). 
In our previous work \cite{CR}, we studied the
intrinsic and extrinsic geometry of $\calH$ in $\calM$.
We observed that the Ebin metric induces
the so-called Calabi geometry on $\calH$, and that this embedding is
as far from being totally geodesic as possible.
It then seems natural to ask whether there exists a metric
on $\calM$ that still induces the Calabi geometry on $\calH$
but with the property that $\calH$ is totally geodesic. As before, 
it is natural to restrict to conformal deformations
depending on the volume, this time since the volume is an invariant
of the \K class, and so any such metric will induce
the Calabi geometry on $\calH$. We then show that to the
extent possible, $\gN$ is the unique metric with the 
aforementioned property.
In particular, $\calH$ is totally geodesic in the case that $M$ is a
Riemann surface.
In general $\mathcal{H}$ is not totally geodesic, but by the 
Calabi--Yau Theorem it is isometric
to the ``Riemannian \K spaces" $\calP g\cap\calM_v$,
consisting of metrics of fixed volume in a fixed conformal class, 
which are totally geodesic in $(\calM,\gN)$ (Corollary
\ref{HTotallyGeodCor}).

One further possible application of the metric $\gN$ is to
the Ricci flow. Recently, we showed that in the \K setting there is a
connection between the existence of Einstein 
metrics, the smooth convergence of the normalized Ricci
flow, and the metric geometry of $(\calM,\gE)$. Namely,
a \KE metric exists on a Fano manifold if and only if
the K\"ahler--Ricci flow converges in the metric completion of $(\calM,\gE)$,
and in particular if and only if the flow path has finite length
\cite{CR}. 
It would be very interesting to find analogous results
for other classes of Riemannian manifolds, perhaps ones
for which the singularities of the Ricci flow can be understood fairly well. 
In studying this problem, it might prove useful to use the metric $\gN$,
for which the the submanifold $\calM_v\subset\calM$ of metrics of fixed 
volume $v$---which is preserved by the normalized Ricci flow---is also
totally geodesic (Corollary \ref{MvTotGeod}).

Motivated by these and possible other applications of the 
metric $\gN$ to geometric problems,
we thus study the 
geometry of $(\calM,\gN)$ in detail.  Under the conformal change, the geodesic equation becomes
substantially more difficult since it contains non-local terms
involving integation over the whole manifold. The solution is obtained in several
steps, building upon the work of Freed--Groisser for $\gE$ \cite{FG}.
A key extra ingredient here is an invariant of the $\gN$-geodesic
flow, or a `constant of motion' (Corollary \ref{HessianLogVCor}).
The solution of the geodesic equation (Theorem \ref{gNGeod})
gives a precise sense to how geodesics in $(\calM,\gN)$
generalize those discovered by Calabi \cite{C1,C2} for
the subspace of \K metrics, which in turn bear several similarities
with constrained geodesics of the Wasserstein metric in optimal
transportation \cite{CG} (cf. \cite{CR}).
We also compute the curvature
of $\gN$ and compare it to that of the metrics $g_p$ 
(\S\ref{CurvatureSubSec}).

Finally, it should be noted that ``weighted" $L^2$ type metrics
were also studied by several authors on the space of simple closed
curves in $\RR^2$ (see \cite{MM1,MM3,Sh} and references therein), and this can also be seen as another
motivation for our study. Moreover, very recently, while the present
article was being prepared, Bauer--Harms--Michor \cite{BHM} have written
down the geodesic equation for metrics conformal to $\gE$ on
$\calM$, as well as much more general Sobolev-type metrics and metrics
weighted by the scalar curvature function.
Their main result is
that for some of these metrics the exponential
mapping is a local diffeomorphism.
In this article, we go into greater depth for a smaller class of
metrics by solving the geodesic equation, computing the curvature,
estimating the distance function, and determining the metric completion.

The article is organized as follows.  In Section \ref{PrelimSec}, we
briefly review the relevant preliminaries about $\mathcal{M}$.  In
Section \ref{Sec:ConfDefEbin}, we discuss general conformal changes,
mostly focusing on those involving functions of the volume. For
the model metrics $g_p$ we compute the curvature as well as find 
an invariant (or `constant of motion') of the geodesic flow.
Section \ref{GeomgN} contains the solution of the initial value problem for
$\gN$-geodesics, making use of the invariant of the geodesic flow.
In Section \ref{distance-functions}, we study the distance functions
of $g_p$ and determine their metric completions. Some of the technical
facts needed in this analysis are proven in an Appendix.  Section
\ref{sec:Remarks} concludes with some further remarks and a few open questions.

\subsection*{Acknowledgements}
This material is based upon work supported in part by NSF grants 
DMS-0902674, 0802923.  B.C.~thanks X.~Dai and
G.~Wei for interesting discussions related to weighted $L^2$
metrics on $\mathcal{M}$.

\section{Preliminaries}
\label{PrelimSec}

Since the preliminaries relevant to our results are covered in detail
in \cite{E,FG,GM}, we will simply briefly summarize what we need in
this section.

The manifold of metrics, $\mathcal{M}$, is easily seen to be an open
cone in the Fr\'echet space $\Gamma(S^2 T^* M)$ of smooth, symmetric
$(0, 2)$-tensors on the finite-dimensional, compact manifold $M$.  As
such, it is endowed with the structure of a Fr\'echet
manifold, and its tangent space at $g \in \mathcal{M}$ is canonically
identified with $\Gamma(S^2 T^* M)$.

The Ebin metric, defined in \eqref{EbinMetDef}, is a smooth Riemannian
metric.  It is, however, a weak metric, meaning that the tangent
spaces of $\mathcal{M}$ are incomplete with respect to the scalar
product induced on them by the Ebin metric.  For weak Riemannian
metrics, the existence of the Levi--Civita connection is not
guaranteed by any general results.  Nevertheless, the Ebin metric has
a Levi--Civita connection which can be directly computed.  Geodesics
and curvature may also be directly computed. The Riemannian curvature
of $(\mathcal{M}, \gE)$ is nonpositive, and the exponential mapping at
any point $g \in \mathcal{M}$ is a real-analytic diffeomorphism from
an open neighborhood of zero in $T_g \mathcal{M}$ to an open
neighborhood of $g$ in $\mathcal{M}$.  (Both of these neighborhoods
are taken in the $C^\infty$ topology.)

With respect to $\gE$, we may orthogonally decompose the tangent space
$T_g \mathcal{M}$ into the subspaces of traceless (satisfying
$\tr(g^{-1} h) = 0$) and pure-trace (satisfying $h = \rho g$ for some
$\rho \in C^\infty(M)$) tensor fields.
Corresponding to this decomposition is a product manifold structure
for $\mathcal{M}$.  Denote by $\mathcal{V}$ the space of all smooth,
positive volume forms on $M$; it is an open cone in $\Omega^n(M)$, the
space of smooth top-degree forms.  For any $g \in \mathcal{M}$ we
denote by $dV_g$ its induced volume form.  Then for any $\mu \in
\mathcal{V}$, with $\mathcal{M}_\mu := \{ g \in \mathcal{M} \, : \,
dV_g = \mu \} \subset \mathcal{M}$, there is a diffeomorphism
\begin{equation}
\label{imuEq}
  i_\mu : \mathcal{V} \times \mathcal{M}_\mu \rightarrow \mathcal{M}
  \qquad i_\mu(\nu, h) =
  ({\nu}/{\mu})^{2/n}
  h.
\end{equation}
That is, $i_\mu$ maps $(\nu, h)$ to the unique metric conformal to $h$
with volume form $\nu$.  Thus $\mathcal{M} \cong \mathcal{V} \times
\mathcal{M}_\mu$, and one sees that $(i_\mu)_*(T \mathcal{V})$ is the
subbundle of $T \mathcal{M}$ consisting of pure-trace tensor fields,
while a tangent space to the submanifold $\mathcal{M}_\mu$ is
identified with the subspace of traceless tensor fields.

An identity that will be repeatedly used below is that
the differential of the map $g
\mapsto dV_g$ is $h \mapsto \frac{1}{2} \tr(g^{-1} h) dV_g$.
Therefore, if we denote by $V = V_g := \int_M dV_g$ the volume
function on $\calM$, then 
the differential of $g \mapsto V_g$ is $h \mapsto \frac{1}{2} \E g h$.

\section{Conformal deformations of the Ebin metric}
\label{Sec:ConfDefEbin}

Let $f:\calM\ra\RR$ be a twice continuously differentiable function, and
consider the metric on $\calM$,
\begin{equation}
g_f(h,k)|_g:=e^{2f(g)}\gE(h,k)|_g=e^{2f(g)}\int_M \tr (\ginv h \ginv k) dV_g,\quad h,k\in T_g\calM,
\end{equation}
conformal to the Ebin metric. 
The purpose of this section is to characterize two metrics in
the conformal class of $\gE$. One metric, the generalized Calabi
metric $\gN=\gE/V$, is characterized
by its Levi-Civita connection (Proposition \ref{VFactorProp}), 
and the other, the second Ebin metric $g_2=\gE/V^2$, 
by its curvature tensor (Proposition \ref{MiracleProp}). 
We then restrict to the model metrics
\begin{equation}\label{gpDef}
  g_p := \frac{1}{V_g^p} \gE, \quad g\in\calM
\end{equation}
for some integer $p$.
We find invariants for their geodesic flows that
will be important later in integrating the geodesic equation
(Corollary \ref{HessianLogVCor} and Lemma \ref{VolumePowerLinearLemma}),
compute their curvature (\S\ref{CurvatureSubSec}), and describe
a natural duality map on $\calM$ that is a conformal isometry
between $g_p$ and $g_{2-p}$ (Proposition \ref{IsometryProp})
and that also conformally relates their curvature tensors.

\subsection{Conformal deformations and the Levi-Civita connection}
\label{LCConfSubSec}

Our first observation is a characterization of the {\it generalized
Calabi metric}
$$
\gN:= g_1 = \gE/V_g, \quad g\in\calM.
$$

\begin{prop}
\label{VFactorProp}
Let $f:\calM\ra\RR$ be a smooth function. Let $\nabla^f$ denote the
Levi-Civita connection of $e^{2f}\gE$, and suppose that $\nabla^f g=0$,
where $g$ denotes the tautological vector field $g \mapsto g$ on
$\calM$.  Then $f(g)=-\half\log V_g+C$ for some constant $C$.
\end{prop}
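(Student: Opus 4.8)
The plan is to apply the Koszul formula for the Levi-Civita connection $\nabla^f$ of $g_f := e^{2f}\gE$ to the tautological field, which I will write as $X$ (the field denoted $g$ in the statement, so $X|_g = g \in T_g\calM$), and then reduce the vanishing condition to a pointwise algebraic identity in the differentials $df$ and $dV$. Since $\calM$ is an open convex cone in the vector space $\Gamma(S^2 T^* M)$, I may use its affine structure: for an affinely constant field $h$ one has $D_h X = h$, hence the Lie brackets $[h,X]=h$, $[X,l]=-l$, $[h,l]=0$. As $\nabla^f$ is metric-compatible and torsion-free, the Koszul identity
\[
2\,g_f(\nabla^f_h X, l) = h\,g_f(X,l) + X\,g_f(h,l) - l\,g_f(h,X) + g_f([h,X],l) - g_f([X,l],h) + g_f([l,h],X)
\]
holds for all affinely constant $h,l$, and each term can be evaluated directly. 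Note that no weak-metric non-degeneracy is required for what follows: the hypothesis supplies $g_f(\nabla^f_h X, l) = 0$ outright, so the Koszul formula is used purely as an identity.

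Expanding $g_f = e^{2f}\gE$ in every term splits the sum into a pure Ebin part (carrying the factor $e^{2f}$) and a part carrying the derivatives of $e^{2f}$, i.e.\ $df$. For the Ebin part I use the scaling homogeneity $\gE(h,l)|_{cg} = c^{\,n/2-2}\gE(h,l)|_g$, which gives $X\,\gE(h,l) = (\tfrac n2 - 2)\,\gE(h,l)$, while the two first-order terms $h\,\gE(X,l)$ and $l\,\gE(h,X)$ coincide by the symmetry of $\tfrac{d}{dt}(g+th)^{-1} = -\ginv h\ginv$ together with the volume-form derivative $h\mapsto \tfrac12\tr(\ginv h)\,dV_g$, and hence cancel in their antisymmetric combination; the bracket terms contribute $2\gE(h,l)$. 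Summing, the Ebin part equals $\tfrac n2\gE(h,l)$, that is $\nabla^{\gE}_h X = \tfrac n4 h$. For the remaining part I use the preliminary identity $\gE(X,h) = 2\,dV(h)$. Dividing by $2e^{2f}$, the hypothesis $\nabla^f_h X = 0$ becomes
\[
\Big(\tfrac n4 + df(X)\Big)\gE(h,l) + 2\big(df(h)\,dV(l) - df(l)\,dV(h)\big) = 0
\qquad\text{for all } h,l.
\]

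From here the conclusion is pure linear algebra at each $g\in\calM$. The second term is antisymmetric in $(h,l)$ while $\gE(h,l)$ is symmetric, so symmetrizing forces $\tfrac n4 + df(X) = 0$, and antisymmetrizing forces $df(h)\,dV(l) = df(l)\,dV(h)$ for all $h,l$. The latter says $df \wedge dV = 0$; since $dV(X) = \tfrac12\gE(X,X) = \tfrac n2 V \neq 0$, the form $dV$ is nowhere zero, so $df = \mu\,dV$ for a scalar $\mu=\mu(g)$. Evaluating on $X$ and combining $df(X) = -\tfrac n4$ with $dV(X) = \tfrac n2 V$ pins down $\mu = -\tfrac{1}{2V}$, whence $df = -\tfrac{1}{2V}\,dV = d\big(-\tfrac12\log V\big)$. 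As $\calM$ is connected, integration gives $f(g) = -\tfrac12\log V_g + C$.

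The main obstacle is the honest computation of the Ebin covariant derivative of the tautological field, $\nabla^{\gE}_h X = \tfrac n4 h$, in the weak, infinite-dimensional setting: one must correctly track the contribution of the volume density $dV_g$ (equivalently the $\sqrt{\det g}$ factor) to the derivatives of $\gE$, and verify both the cancellation of the first-order terms and the scaling homogeneity of degree $\tfrac n2 - 2$ that together produce the exact constant $\tfrac n4$. This constant is not incidental: it is precisely what forces the coefficient $-\tfrac12$ in front of $\log V$, so getting it right is essential to obtaining the stated normalization $\gN = \gE/V$.
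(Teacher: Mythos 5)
Your proof is correct, and it reaches the same two pivotal facts as the paper --- the identity $\nabla^{\gE}_h X=\frac n4 h$ for the tautological field, and the reduction of $\nabla^f X=0$ to $df(X)=-\frac n4$ together with $df\wedge dV=0$ --- but it gets there by a more self-contained route. The paper quotes Ebin's explicit formula for $\nabla^{\gE}$ to read off $\nabla^{\gE}g=\frac n4\delta$, then invokes Besse's conformal-change formula for the Levi--Civita connection and finishes by substituting $h=g$; you instead run the Koszul identity directly for $e^{2f}\gE$, computing $X\,\gE(h,l)=(\frac n2-2)\gE(h,l)$ from the scaling homogeneity of $\gE$, checking the cancellation $h\,\gE(X,l)=l\,\gE(h,X)$ by the symmetry of $\tr(\ginv h\ginv l)$, and organizing the endgame as a symmetric/antisymmetric decomposition of a bilinear identity rather than as a substitution. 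What your version buys is independence from the two quoted formulas (in effect you re-derive the relevant special case of the conformal-change formula), plus the explicit remark that the argument never needs to invert the weak metric, since the hypothesis hands you $g_f(\nabla^f_hX,l)=0$ directly; what it costs is a slightly longer computation, since the paper's cited inputs package the same work. All the constants check out: $\gE(X,h)=2\,dV(h)$, $dV(X)=\frac n2V$, hence $\mu=-\frac1{2V}$ and $f=-\half\log V+C$ on the connected space $\calM$, exactly as in the paper.
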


\begin{proof}
First, note that $\nabla^{\gE}g=\frac n4\delta$, where $\delta$ is the
Kronecker tensor. 
This follows easily from the formula for the Levi--Civita connection
of $\gE$ \cite[(4.1)]{E},
\begin{equation*}
  \left. \nabla^{\gE}_h k \right|_g = D_h k - \frac{1}{2} (h g^{-1} k + k g^{-1} h) +
  \frac{1}{4}
  \left(
    \tr(g^{-1} k) h + \tr(g^{-1} h) k - \tr(g^{-1} h g^{-1} k) g
  \right),
\end{equation*}
where $h$ and $k$ are any vector fields on $\mathcal{M}$ and
$\left. D_h k \right|_g = \left. \frac{d}{dt} \right|_{t=0} k(g + t
h)$.

Next, recall that \cite[p. 58]{B}, 
\begin{equation}
\label{LCConformalChangeEq}
\nabla^f_hk
=
\nabla^{\gE}_hk+(\nabla_hf)k+(\nabla_kf)h-\E hk\nabla^{\gE} f,
\end{equation}
so $\nabla g=0$ is equivalent to
$$
0=\frac n4 h+df(h)g+df(g)h-\E hg\nabla^{\gE} f, \quad \h{for all\ } h.
$$
Plugging in $h=g$ shows that $\nabla^{\gE}f$ is proportional to $g$;
and by inspecting the equation again then necessarily 
$-\frac n4=df(g)=D_g f=\nabla_g^{\gE}f$ and $dF(h)g=\E hg\nabla^{\gE}f$. Combining these two equations
yields $\nabla^{\gE} f=-\frac1{4V}g$ and substituting this
back into the second equation yields $df(h)=-\quarter\NE gh$.
Now consider a path $\{g(t)\}$. Then
$$
\frac d{dt} f(g(t))
=-\frac1{4V}\int_M \tr(g(t)^{-1}g_t)dV_{g(t)}
=-\frac12\frac d{dt}\log V_{g(t)},
$$
hence $f(g)=-\half\log V_g+C$ (as $\calM$ is path connected),
as desired.
\end{proof}

Since, by the proof above, $g$ is the gradient vector field of $2\log
V$ with respect to $\gN$, we
have the following corollary, which will prove
crucial in integrating the geodesic equation for $\gN$
in \S\ref{GeomgN}.

\begin{cor} 
\label{HessianLogVCor}
The Hessian of $\log V$ satisfies
$$
\nabla^{\gN}\,d\,{\log V}=0.
$$
In particular, $\log V$ is linear and $V$ is either strictly monotone
or constant along $\gN$-geodesics.
\end{cor}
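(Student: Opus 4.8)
The plan is to read off two facts from the proof of Proposition~\ref{VFactorProp} and then invoke the elementary relation between a function's Hessian and the covariant derivative of its gradient. First, that same computation shows that the defining property $\nabla^{\gN}g=0$ holds precisely for the conformal factor determining $\gN=\gE/V$, so I may take the tautological field $g$ to be $\gN$-parallel. Second, $g$ is the $\gN$-gradient of $2\log V$: using the differential of the volume recalled in Section~\ref{PrelimSec}, namely $dV(h)=\half\E{g}{h}$, I compute $d(2\log V)(h)=\frac1V\E{g}{h}=\frac1V\gE(g,h)=\gN(g,h)$ for every $h\in T_g\calM$, which is exactly the statement $\operatorname{grad}^{\gN}(2\log V)=g$.

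Given these two ingredients, the identity $\nabla^{\gN}d\log V=0$ is immediate. For any smooth $\phi$ and vector fields $X,Y$, compatibility of the Levi--Civita connection with $\gN$ yields the standard formula $(\nabla^{\gN}_X d\phi)(Y)=\gN\bigl(\nabla^{\gN}_X\operatorname{grad}^{\gN}\phi,\,Y\bigr)$, identifying the Hessian $\nabla^{\gN}d\phi$ with the covariant derivative of the gradient. Taking $\phi=2\log V$, whose gradient is $g$, and using $\nabla^{\gN}g=0$, every component vanishes, so $\nabla^{\gN}d(2\log V)=0$ and hence $\nabla^{\gN}d\log V=0$. The one point deserving a remark is that all of this takes place for a weak Riemannian metric in infinite dimensions; it is legitimate because, as recalled in Section~\ref{PrelimSec}, $\gE$---and therefore the conformal metric $\gN$---possesses a bona fide Levi--Civita connection for which these formal identities hold.

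For the final assertion I restrict $\log V$ to a $\gN$-geodesic $t\mapsto g(t)$ and use $\frac{d^2}{dt^2}\log V(g(t))=(\nabla^{\gN}d\log V)(\dot g,\dot g)+d\log V\bigl(\nabla^{\gN}_{\dot g}\dot g\bigr)$. The first term vanishes by what was just proved and the second because $\nabla^{\gN}_{\dot g}\dot g=0$ along a geodesic; thus $\log V(g(t))$ is affine, say $\log V(g(t))=at+b$, so $V(g(t))=e^{at+b}$ is strictly monotone when $a\neq0$ and constant when $a=0$. I expect no genuine obstacle here: the whole corollary is simply the reformulation of the already-established fact $\nabla^{\gN}g=0$ once one recognizes $g$ as the gradient of $2\log V$, so the only real task is to assemble these pieces and to note that the weak-metric setting does not invalidate the standard Riemannian identities.
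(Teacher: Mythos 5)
Your proof is correct and follows essentially the same route as the paper, which derives the corollary in one line from the two facts you isolate: $\nabla^{\gN}g=0$ (the defining property of $\gN$ from Proposition~\ref{VFactorProp}) and $g=\operatorname{grad}^{\gN}(2\log V)$. You simply make explicit the standard identification of the Hessian with the covariant derivative of the gradient and the restriction to geodesics, both of which the paper leaves implicit.
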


By the above corollary, if $g(t)$ is a $\gN$-geodesic and
$(V_{g(t)})_t(0) = 0$, then $V_{g(t)}$ is constant.  This gives the
following fact.

\begin{cor}\label{MvTotGeod}
  For any $v \in \RR_+$, the submanifold $\mathcal{M}_v := \{ g \in
  \mathcal{M} \, : \, V_g = v \}$ is totally geodesic in
  $(\mathcal{M}, \gN)$.
\end{cor}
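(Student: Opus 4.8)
The plan is to use the standard characterization that a submanifold is totally geodesic precisely when every geodesic of the ambient space that is tangent to the submanifold at one point remains within it (for as long as it exists). Accordingly, I would take an arbitrary $\gN$-geodesic $g(t)$ with $g(0)\in\mathcal{M}_v$ whose initial velocity $\dot g(0)$ is tangent to $\mathcal{M}_v$, and show that $V_{g(t)}\equiv v$, so that the whole geodesic lies in $\mathcal{M}_v$.

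First I would pin down the tangent space of $\mathcal{M}_v$. Since $\mathcal{M}_v$ is the level set $\{V_g=v\}$, and by the identity recalled in the preliminaries the differential of $g\mapsto V_g$ is $h\mapsto\frac12\E g h$, a vector $h\in T_g\mathcal{M}$ lies in $T_g\mathcal{M}_v$ exactly when $\E g h=0$. One should first note that $V$ is a submersion, since $dV_g(g)=\frac12\E g g=\frac n2 V_g>0$ (using $\tr(g^{-1}g)=n$ pointwise), so that $\mathcal{M}_v$ is a genuine smooth codimension-one submanifold. In particular, the tangency condition $\dot g(0)\in T_{g(0)}\mathcal{M}_v$ is equivalent to $\frac{d}{dt}\big|_{t=0}V_{g(t)}=0$, i.e.\ to $(V_{g(t)})_t(0)=0$.

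The heart of the argument is then Corollary \ref{HessianLogVCor}: along any $\gN$-geodesic the function $t\mapsto\log V_{g(t)}$ is affine, that is $\frac{d^2}{dt^2}\log V_{g(t)}=0$. Combined with the tangency condition, which gives $\frac{d}{dt}\big|_{t=0}\log V_{g(t)}=(V_{g(0)})^{-1}\frac{d}{dt}\big|_{t=0}V_{g(t)}=0$, the affine function $\log V_{g(t)}$ has vanishing derivative at $t=0$ and vanishing second derivative throughout, hence is constant. Therefore $V_{g(t)}=V_{g(0)}=v$ for all $t$, so $g(t)$ remains in $\mathcal{M}_v$, which is the claim.

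As an obstacle this is modest: the genuine content is already packaged in Corollary \ref{HessianLogVCor}, and what remains is the bookkeeping above. The only points requiring a little care are (i) verifying that $\mathcal{M}_v$ is an honest submanifold with the stated tangent space, and (ii) ensuring that in this weak, infinite-dimensional Riemannian setting ``totally geodesic'' is read as ``geodesics tangent at a point remain locally within the submanifold,'' which is exactly the formulation supplied by the constant-of-motion argument and which we may take as the working definition; no global geodesic-existence statement is needed, since the argument shows $V$ stays constant on whatever interval the geodesic is defined.
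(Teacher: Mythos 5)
Your proposal is correct and follows the same route as the paper: the paper derives this corollary directly from Corollary \ref{HessianLogVCor}, observing that if $(V_{g(t)})_t(0)=0$ along a $\gN$-geodesic then linearity of $\log V$ forces $V_{g(t)}$ to be constant. Your additional remarks on the submersion property of $V$ and the tangent space of $\mathcal{M}_v$ are correct bookkeeping that the paper leaves implicit.
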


As Corollary \ref{VolumeGpGeodCor} below will imply, the above
statement is true for $g_p$ only when $p = 1$.  In particular, it is
false for the Ebin metric.

By using the Koszul formula 
(or else by using (\ref{LCConformalChangeEq}) and the known
expression for $\nabla^{\gE}$),
one can directly compute the Levi-Civita connection of $\gNE$
for constant vector fields $h,k$ to be
\begin{equation}
\label
{gNLCConnectionEq}
\begin{aligned}
 \nabla^{\gN}_hk|_g
& =
\quarter \gNE(k,h)g-\quarter\tr(\ginv h\ginv k)g
-\half h\ginv k-\half k\ginv h
\cr
&\quad
+\quarter\tr(\ginv h)k
+\quarter\tr(\ginv k)h
-\quarter \gNE(h,g)k
-\quarter \gNE(k,g)h.
\end{aligned}
\end{equation}
It is torsion free (symmetric in $h$ and $k$), and
one checks directly that it is metric compatible, 
hence it is the Levi-Civita connection.

It is well-known that along $\gE$-geodesics the volume is quadratic \cite{FG}.
This is explained by the following Lemma and Corollary, which are in a similar vein to 
Corollary \ref{HessianLogVCor}.

\begin{lem}
\label{VolumePowerLinearLemma}
We have $$
\nabla^{g_p}dV^{1-p}=\frac n8(1-p)^2g_p.
$$
\end{lem}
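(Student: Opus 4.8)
The plan is to compute the Hessian of the function $\phi:=V^{1-p}$ directly from $(\nabla^{g_p}d\phi)(h,k)=D_h\big(d\phi(k)\big)-d\phi\big(\nabla^{g_p}_hk\big)$, evaluated on constant vector fields $h,k$ on the cone $\calM$, and to exhibit a cancellation of all the non-local contributions. As preparation I would record, exactly as in the proof of Proposition \ref{VFactorProp}, that $g_p=e^{2f}\gE$ with $f=-\frac p2\log V$, that $df(h)=-\frac{p}{4V}\E{g}{h}$ and $\nabla^{\gE}f=-\frac{p}{4V}g$ (using that the differential of $g\mapsto V_g$ is $h\mapsto\half\E{g}{h}$), together with the pointwise variational identities $D_h(\ginv)=-\ginv h\ginv$ and $D_h(dV_g)=\half\tr(\ginv h)\,dV_g$.

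First I would compute the one-form and its derivative. Since $d\phi=(1-p)V^{-p}\,dV$ and $dV(k)=\half\E{g}{k}$, one has $d\phi(k)=\frac{1-p}{2}V^{-p}\int_M\tr(\ginv k)\,dV_g$. Differentiating this scalar along $g+th$ with the two variational identities yields $D_h\big(d\phi(k)\big)=\frac{1-p}{2}V^{-p}\big[-\frac{p}{2V}\E{g}{h}\E{g}{k}-\E{h}{k}+\half\int_M\tr(\ginv h)\tr(\ginv k)\,dV_g\big]$, which contains a quadratic volume term, a local term, and a non-local term.

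Next I would compute $d\phi\big(\nabla^{g_p}_hk\big)=\frac{1-p}{2}V^{-p}\,\E{g}{\nabla^{g_p}_hk}$. Expanding $\nabla^{g_p}_hk$ through \eqref{LCConformalChangeEq} as $\nabla^{\gE}_hk-\frac{p}{4V}\E{g}{h}k-\frac{p}{4V}\E{g}{k}h+\frac{p}{4V}\E{h}{k}g$ and pairing with $g$, I would use $\E{g}{g}=nV$ together with the trace computation $\tr\big(\ginv\nabla^{\gE}_hk\big)=-(1+\frac n4)\tr(\ginv h\ginv k)+\half\tr(\ginv h)\tr(\ginv k)$, which follows from the explicit Ebin connection recalled in Proposition \ref{VFactorProp}. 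This produces $\E{g}{\nabla^{g_p}_hk}=\big(-1-\frac n4+\frac{np}{4}\big)\E{h}{k}+\half\int_M\tr(\ginv h)\tr(\ginv k)\,dV_g-\frac{p}{2V}\E{g}{h}\E{g}{k}$, i.e.\ the same three kinds of terms.

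The main (indeed the only) point is that the non-local term $\half\int_M\tr(\ginv h)\tr(\ginv k)\,dV_g$ and the quadratic term $\frac{p}{2V}\E{g}{h}\E{g}{k}$ occur identically in both expressions and therefore cancel in the difference, leaving a single multiple of $\E{h}{k}$ with coefficient $\frac{1-p}{2}V^{-p}\big[-1-\big(-1-\frac n4+\frac{np}{4}\big)\big]=\frac n8(1-p)^2V^{-p}$. Since $V^{-p}\E{h}{k}=g_p(h,k)$, this is exactly $\frac n8(1-p)^2g_p$, as claimed. The argument is thus pure bookkeeping once the cancellation is anticipated; the substantive content is that the conformal weight $V^{-p}$ is tuned precisely so that the non-local pieces produced by differentiating $\phi$ and by the conformal correction to the connection match and annihilate one another.
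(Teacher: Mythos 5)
Your computation is correct and follows essentially the same route as the paper: both expand $\nabla^{g_p}d V^{1-p}(h,k)=D_h(dV^{1-p}(k))-dV^{1-p}(\nabla^{g_p}_hk)$ on constant vector fields, using the conformal-change formula \eqref{LCConformalChangeEq} for $\nabla^{g_p}$, and observe that the non-local and quadratic volume terms cancel, leaving $\frac{n}{8}(1-p)^2 g_p(h,k)$. The intermediate expressions you record (including the trace of the Ebin connection and the final coefficient bookkeeping) agree with the paper's.
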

\begin{proof}
By (\ref{LCConformalChangeEq}) and (\ref{gNLCConnectionEq})
for constant vector fields $h,k$,
$$
\begin{aligned}
 \nabla^{g_p}_hk|_g
& =
\frac{p}{4}  \N kh g-\quarter\tr(\ginv h\ginv k)g
-\half h\ginv k-\half k\ginv h
\cr
&\quad
+\quarter\tr(\ginv h)k
+\quarter\tr(\ginv k)h
-\frac{p}{4}\N hgk
-\frac{p^2}{4} \N kgh.
\end{aligned}
$$
Thus,
$$
\begin{aligned}
\nabla^{g_p}dV^{1-p}(h,k)
& =
\half\nabla^{g_p}_h(1-p)V^{-p}\E gk-\half(1-p)V^{-p}\E g{\nabla^{g_p}_hk}
\cr
& =
-\frac{p(1-p)}{4V^{p+1}}\E gh\E gk-\frac{1-p}{2V^p}\E hk
+\frac{1-p}{4V^p}\E g{\tr(\ginv h)k}
\cr
&
\quad -\frac{np(1-p)}{8V^p}\E hk+\frac{n(1-p)}{8V^p}\E hk+\frac{1-p}{2V^p}\E hk
\cr
&
\quad -\frac{1-p}{4V^p}\E g{\tr(\ginv h)k}+\frac{p(1-p)}{4V^{p+1}}\E gh\E gk
\cr
& = \frac n8 (1-p)^2 g_p(h,k).
\end{aligned}
$$

\end{proof}

\begin{cor}
\label{VolumeGpGeodCor}
Along unit-speed $g_p$-geodesics $g(t)$, $V^{1-p}$ grows quadratically ($p\ne 1$),
$$
V^{1-p}(t)
=
\sixtienthn (1-p)^2 t^2+\frac{1-p}2a_0t+V^{1-p}(0),
$$
where
$$
a_0:=
\frac{1}{V^p_{g_0}}\int_M f(0)dV_{g_0},\qquad f(t):=\tr(\ginv g_t).
$$

In particular, $V_{g(t)}$ converges to $0$ (if $p<1$) or to $\infty$
(if $p>1$) in finite time precisely along constant conformal
directions, i.e., if $g_t(0)=\lambda g(0)$ for $\lambda\in\RR$ and
$\lambda$ negative (if $p<1$) or positive (if $p>1$).  Also, along a
unit-speed $g_p$-geodesic,
\begin{equation}
\label{ConstantMotionEq}
\frac{d}{dt}\Big(\frac1{V^p}\int_M f\, dV_g\Big)=\frac n{4}(1-p).
\end{equation}

\end{cor}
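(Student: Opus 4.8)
The plan is to extract all three assertions from Lemma \ref{VolumePowerLinearLemma} by evaluating the Hessian of $V^{1-p}$ along a geodesic. First I would use that for any smooth $\phi$ and any $g_p$-geodesic $g(t)$ one has $\frac{d^2}{dt^2}\phi(g(t)) = \nabla^{g_p}d\phi(\dot g,\dot g) + d\phi\big(\nabla^{g_p}_{\dot g}\dot g\big)$, where the second term vanishes by the geodesic equation $\nabla^{g_p}_{\dot g}\dot g = 0$. Taking $\phi = V^{1-p}$ and inserting Lemma \ref{VolumePowerLinearLemma} gives $\frac{d^2}{dt^2}V^{1-p}(g(t)) = \frac n8(1-p)^2\,g_p(\dot g,\dot g)$, which for a unit-speed geodesic is the constant $\frac n8(1-p)^2$. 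Integrating twice produces the stated parabola, and the linear coefficient is pinned down by computing $\frac{d}{dt}V^{1-p}$ via the chain rule and the volume differential $h\mapsto\frac12\E{g}{h}$ from Section \ref{PrelimSec}, namely $\frac{d}{dt}V^{1-p} = (1-p)V^{-p}\cdot\frac12\E{g}{\dot g} = \frac{1-p}{2V^p}\int_M f\,dV_g$, whose value at $t=0$ is $\frac{1-p}{2}a_0$.

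For the finite-time collapse/blow-up statement I would study the parabola $Q(t) := \frac n{16}(1-p)^2 t^2 + \frac{1-p}2 a_0 t + V^{1-p}(0)$ representing $V^{1-p}(t)$. Because $1-p>0$ for $p<1$ and $1-p<0$ for $p>1$, in both cases $V\to 0$ (resp. $V\to\infty$) in finite time is equivalent to $Q$ attaining the value $0$. Since $Q$ opens upward, its minimum equals $V^{1-p}(0) - a_0^2/n$, so everything hinges on the sharp bound $a_0^2 \le n\,V^{1-p}(0)$. I would prove this by a single Cauchy--Schwarz inequality in $L^2(M,dV_{g_0})$: writing $A := g^{-1}\dot g(0)$ for the $g_0$-symmetric velocity endomorphism, so that $f(0) = \tr A$ and the unit-speed normalization reads $\int_M|A|^2\,dV_{g_0} = V_{g_0}^{\,p}$, pairing $A$ against the identity endomorphism $\I$ gives $\big(\int_M \tr A\,dV_{g_0}\big)^2 = \langle A,\I\rangle_{L^2}^2 \le \|A\|_{L^2}^2\,\|\I\|_{L^2}^2 = V_{g_0}^{\,p}\cdot nV_{g_0}$, that is $a_0^2 \le n\,V^{1-p}(0)$.

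The main obstacle, and the content of the word ``precisely'', is the equality case. Equality in the above Cauchy--Schwarz forces $A$ and $\I$ to be linearly dependent in $L^2$, hence $A \equiv \lambda\,\I$ for a constant $\lambda$ by smoothness, i.e. $\dot g(0) = \lambda\, g(0)$ is a constant conformal direction. Consequently the minimum of $Q$ is always nonnegative, and it vanishes---so that $Q$ has a (double) root---exactly along constant conformal directions. Evaluating $a_0 = \lambda n V_{g_0}^{1-p}$ in that case places the root at $t_\ast = -\,4a_0/\big(n(1-p)\big) = -\,4\lambda V_{g_0}^{1-p}/(1-p)$, and requiring $t_\ast>0$ selects $\lambda<0$ when $p<1$ and $\lambda>0$ when $p>1$, exactly as claimed; for non-conformal directions $Q$ stays strictly positive and no finite-time degeneration occurs.

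Finally, the constant of motion \eqref{ConstantMotionEq} falls out of the first two steps: the chain-rule identity above rearranges to $\frac1{V^p}\int_M f\,dV_g = \frac{2}{1-p}\,\frac{d}{dt}V^{1-p}$, and since $\frac{d}{dt}V^{1-p} = \frac n8(1-p)^2 t + \frac{1-p}2 a_0$ is affine in $t$, one further differentiation gives $\frac{d}{dt}\big(\frac1{V^p}\int_M f\,dV_g\big) = \frac n4(1-p)$. This identity even persists at $p=1$, where it reduces to the linearity of $\log V$ from Corollary \ref{HessianLogVCor}.
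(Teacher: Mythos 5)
Your proof is correct and follows exactly the route the paper intends: the quadratic formula is the double integration of $\frac{d^2}{dt^2}V^{1-p}=\nabla^{g_p}dV^{1-p}(\dot g,\dot g)=\frac n8(1-p)^2$ from Lemma \ref{VolumePowerLinearLemma}, with the linear coefficient pinned down by the volume differential, and \eqref{ConstantMotionEq} obtained by one more differentiation. Your Cauchy--Schwarz equality-case analysis for the word ``precisely'' is also the right argument --- it is the same computation the paper carries out in \eqref{a0LTnSq} to show $b_0=0$ iff $g_t(0)=\lambda g(0)$ --- so nothing is missing.
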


The last equation provides an integral for the geodesic flow
of $g_p$ which allows solving the geodesic equation explicitly,
in the spirit of the work of Freed--Groisser. This will be 
carried out for $\gN$ using Corollary \ref{HessianLogVCor} in
\S\ref{GeomgN}.

\subsection{Conformal deformations and curvature}
\label{CurvatureSubSec}

Our main purpose in this subsection is to study which conformal
deformations of $\gE$ still have non-positive curvature. The next
result shows that there is precisely one metric of the form $\gE/V^p$,
besides $\gE$ itself, whose curvature is nonpositive---it is also the
unique such metric with curvature conformal to that of $\gE$---and
this characterizes the {\it second Ebin metric}
$$
g_2 = \gE/V^2,
$$
among all conformal deformations that depend on the volume.

\begin{prop}
\label{MiracleProp}
The curvature of $g_p$ is nonpositive if and only
if $p=0$ or $2$. Moreover, the curvature of $g_2=\gE/V^2$ is 
conformal to the curvature of $\gE$,
$$
R^{g_2}=R^{\gE}/V^2,
$$
and this property characterizes $g_2$, up to scaling, among all
conformal deformations $e^{2f}\gE$ with $f:\calM\ra\RR$ a smooth
function depending only on $V_g$.
\end{prop}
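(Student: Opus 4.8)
The plan is to obtain the curvature of $g_p=e^{2f}\gE$ from the conformal change formula, reduce the correction term to a closed expression, and then exploit a factorization in $p$. Since $\gE$ is only a weak metric on the Fr\'echet manifold $\calM$, I would not quote the finite-dimensional conformal-curvature identity as a black box but re-derive it by differentiating the connection formula \eqref{LCConformalChangeEq} (legitimate because $\gE$ has a smooth Levi-Civita connection): for $\gE$-orthonormal constant vector fields $h,k$ this yields
\[
R^{g_f}(h,k,k,h)=e^{2f}\big(R^{\gE}(h,k,k,h)-S(h,h)-S(k,k)\big),\qquad S:=\Hess^{\gE}f-df\otimes df+\tfrac12\,|\nabla^{\gE}f|^2\,\gE,
\]
together with the corresponding statement for the full tensor, in which the correction is built entirely from $S$.

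Next I would specialize to $f=f(V)$. Using the preliminary identity $dV(h)=\half\E{g}{h}$ and the relation $\nabla^{\gE}g=\tfrac n4\delta$ recorded in the proof of Proposition \ref{VFactorProp}, one computes $\nabla^{\gE}f=\tfrac12 f'(V)\,g$, so that $|\nabla^{\gE}f|^2=\tfrac n4 V(f')^2$ and $\Hess^{\gE}f(h,k)=\tfrac14 f''\,\E{g}{h}\,\E{g}{k}+\tfrac n8 f'\,\E{h}{k}$. Assembling the three pieces gives the closed form
\[
S(h,k)=\tfrac14\big(f''-(f')^2\big)\,\E{g}{h}\,\E{g}{k}+\tfrac n8\big(f'+V(f')^2\big)\,\E{h}{k}.
\]
The \emph{miracle} is what happens for $g_p$, i.e. $f=-\tfrac p2\log V$: here $f''-(f')^2=\tfrac{p(2-p)}{4V^2}$ and $f'+V(f')^2=-\tfrac{p(2-p)}{4V}$, so \emph{both} coefficients carry the common factor $p(2-p)$. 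Hence $S\equiv0$ exactly when $p=0$ or $p=2$, and for those values the full formula collapses to $R^{g_p}=e^{2f}R^{\gE}=V^{-p}R^{\gE}$; this is nonpositive and, for $p=2$, gives the asserted $R^{g_2}=R^{\gE}/V^2$.

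For the converse I must show the curvature fails to be nonpositive when $p\notin\{0,2\}$, and here the key geometric point is to test on \emph{pure-trace} (conformal) $2$-planes. Because $\gE$ is ultralocal its curvature is a pointwise expression, and two pure-trace tensors $h=\rho g$, $k=\sigma g$ are pointwise proportional to $g$; thus $R^{\gE}(h,k,k,h)=0$ on every such plane. The correction $S$, by contrast, is genuinely nonlocal (it involves $\E{g}{h}=\int_M\ldots$) and survives. Evaluating $S(h,h)=\tfrac{n^2p(2-p)}{32V}\big(\tfrac2V(\int_M\rho\,dV_g)^2-\int_M\rho^2\,dV_g\big)$, I would choose $\rho$ of mean zero when $0<p<2$ (so the bracket is negative) and $\rho$ essentially constant when $p<0$ or $p>2$ (so the bracket is positive); in either case $S(h,h)+S(k,k)<0$ for a suitable independent pair, and the displayed formula produces a $2$-plane of \emph{positive} sectional curvature.

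Finally, for the characterization I would again restrict to pure-trace planes. If $R^{e^{2f}\gE}=\lambda R^{\gE}$ then, since $R^{\gE}$ vanishes there, so must the correction; this forces $S(h,h)=0$ for every pure-trace $h=\rho g$. Letting $\rho$ range over mean-zero and constant functions separately kills the two coefficients of $S$ independently, giving the system $f''-(f')^2=0$ and $f'+V(f')^2=0$, whose only solutions are $f'\equiv0$ and $f'=-1/V$, i.e. $\gE$ and $g_2$ up to scaling. The main obstacle I anticipate is not the algebra but the two structural inputs: justifying the conformal curvature formula in the weak, infinite-dimensional setting by reducing it to \eqref{LCConformalChangeEq}, and cleanly isolating the vanishing of $R^{\gE}$ on pure-trace planes via ultralocality---this is precisely what decouples the nonlocal correction and powers both the sign analysis and the rigidity.
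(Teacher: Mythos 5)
Your derivation of the conformal curvature formula from \eqref{LCConformalChangeEq}, the computation of $S(h,k)=\tfrac14(f''-(f')^2)\E gh\E gk+\tfrac n8(f'+V(f')^2)\E hk$, the factorization of both coefficients by $p(2-p)$, and the resulting identity $R^{g_2}=R^{\gE}/V^2$ all match the paper (which obtains \eqref{CurvN2Eq} and then \eqref{SecCurvNpEq} the same way). Your characterization argument is also sound and close to the paper's, the only difference being that you test on pure-trace directions and extract both ODEs, whereas the paper tests on directions tangent to $\calM_\mu$ (so that the $g^{\flat}\otimes g^{\flat}$ term drops out) and needs only $f'+V(f')^2=0$; either route gives $f'\equiv 0$ or $f'=-1/V$.

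The genuine gap is in your ``only if'' step for $p<0$ and $p>2$. You propose to take $h=\rho g$, $k=\sigma g$ with $\rho,\sigma$ ``essentially constant'' so that both brackets $\tfrac2V(\int\rho\,dV_g)^2-\int\rho^2 dV_g$ are positive, and conclude $S(h,h)+S(k,k)<0$. This is impossible. Writing $\rho=a+\rho_0$, $\sigma=b+\sigma_0$ with $\rho_0,\sigma_0$ of mean zero, positivity of the two brackets reads $a^2V>\int\rho_0^2$ and $b^2V>\int\sigma_0^2$, while $\gE$-orthogonality forces $|ab|V=|\int\rho_0\sigma_0|\le(\int\rho_0^2)^{1/2}(\int\sigma_0^2)^{1/2}<|ab|V$, a contradiction. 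More structurally, for \emph{any} $\gE$-orthonormal pair $h,k$ Bessel's inequality gives $(\E gh)^2+(\E gk)^2\le\Esq g=nV$, hence
\begin{equation*}
S(h,h)+S(k,k)=\frac{p(2-p)}{16V^2}\Bigl((\E gh)^2+(\E gk)^2\Bigr)-\frac{np(2-p)}{16V}
\end{equation*}
is $\le 0$ when $p(2-p)>0$ and $\ge 0$ when $p(2-p)<0$. So the correction term in your displayed formula is single-signed on \emph{every} $2$-plane, and for $p<0$ or $p>2$ it only makes the curvature more negative: $R^{g_p}(h,k,k,h)\le V^{-p}R^{\gE}(h,k,k,h)\le 0$. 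Your construction therefore produces planes of positive curvature exactly in the range $0<p<2$ (where mean-zero $\rho,\sigma$ work, as you say), and no construction can do better. Note that the paper's own proof of this proposition only addresses the characterization statement and leaves the nonpositivity claim to \eqref{SecCurvNpEq}, where the same Bessel obstruction is visible; so the ``only if'' as literally stated cannot be salvaged by your method for $p$ outside $[0,2]$, and you should flag this rather than assert the existence of a ``suitable independent pair'' there.
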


In the proof we make use of the following computation:

\begin{prop}
\label{CurvgpProp}
  The curvature tensor of $g_p$ is given by
  \begin{equation}
\label{CurvN2Eq}
    R^{g_p} = 
\frac{1}{V^p} R^{\gE} 
+
\frac{2p-p^2}{16}g_p \KN 
     \left( V^{2p-2}g^{\flat_p} \otimes g^{\flat_p} - \frac{n}{2} V^{p-1}g_p \right),
  \end{equation}
where $g^{\flat_p}$ is the 1-form dual to the tautological vector field $g$
with respect to $g_p$ and $\KN$ denotes the Kulkarni--Nomizu product.

Let $h,k$ be tangent vectors that are orthonormal with respect to
$g_p$.  The sectional curvature of the plane $\RR\{h,k\}$ is
\begin{equation}
\label{SecCurvNpEq}
\h{\rm sec}^{g_p}(h,k)
=
\frac1{V^p}\h{\rm sec}^{\gE}(h,k)
-\frac{2p-p^2}{16}
\big(V^{2p-2}\Npsquared gk+ V^{2p-2}\Npsquared gh-nV^{p-1} \big).
\end{equation}
\end{prop}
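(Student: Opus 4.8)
The plan is to read off $R^{g_p}$ from $R^{\gE}$ by means of the standard formula for the behaviour of the curvature tensor under a conformal change, exactly as (\ref{LCConformalChangeEq}) was used at the level of connections. Write $g_p=e^{2f}\gE$ with $f=-\tfrac p2\log V$, so that $e^{2f}=V^{-p}$. According to \cite{B}, $R^{g_p}$ equals $e^{2f}R^{\gE}$ plus $e^{2f}$ times the Kulkarni--Nomizu product of $\gE$ with the symmetric $2$-tensor
$$
A:=\nabla^{\gE}df-df\otimes df+\tfrac12|df|^2_{\gE}\,\gE,
$$
the overall sign of this second term being fixed by the curvature-sign convention. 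The whole computation thus reduces to evaluating $\nabla^{\gE}df$, $df\otimes df$ and the scalar $|df|^2_{\gE}$ for this particular $f$.

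To evaluate these I would invoke two facts already in hand: the differential of the volume, $dV(h)=\tfrac12\E g h$, i.e. $\nabla^{\gE}V=\tfrac12 g$ (Preliminaries), and $\nabla^{\gE}g=\tfrac n4\delta$ (proof of Proposition \ref{VFactorProp}). Composing these gives the Hessian of the volume, $\nabla^{\gE}dV=\tfrac n8\gE$, and the norm $|dV|^2_{\gE}=\tfrac14\|g\|^2_{\gE}=\tfrac{nV}4$, where $\|g\|^2_{\gE}=\int_M\tr(\I)\,dV_g=nV$. Since $df=-\tfrac p{2V}dV$, these yield $|df|^2_{\gE}=\tfrac{np^2}{16V}$ and $df\otimes df=\tfrac{p^2}{4V^2}dV\otimes dV$, while the Hessian of the composite, $\nabla^{\gE}df=-\tfrac p2\big(\tfrac1V\nabla^{\gE}dV-\tfrac1{V^2}dV\otimes dV\big)$, gives $\nabla^{\gE}df=\tfrac p{2V^2}dV\otimes dV-\tfrac{np}{16V}\gE$. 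Assembling $A$, the coefficient of $dV\otimes dV$ is $\tfrac p{2V^2}-\tfrac{p^2}{4V^2}=\tfrac{2p-p^2}{4V^2}$ and that of $\gE$ is $-\tfrac{np}{16V}+\tfrac{np^2}{32V}=-\tfrac{n(2p-p^2)}{32V}$, so the common factor $2p-p^2=p(2-p)$ factors out---precisely the factor whose vanishing at $p=0,2$ will drive Proposition \ref{MiracleProp}. Using $g^{\flat_p}=g_p(g,\cdot)=2V^{-p}dV$ (hence $dV\otimes dV=\tfrac14V^{2p}g^{\flat_p}\otimes g^{\flat_p}$) and $\gE=V^pg_p$ turns this into $A=\tfrac{2p-p^2}{16}\big(V^{2p-2}g^{\flat_p}\otimes g^{\flat_p}-\tfrac n2V^{p-1}g_p\big)$; absorbing $e^{2f}=V^{-p}$ via $V^{-p}\gE=g_p$, so that $e^{2f}\gE\KN A=g_p\KN A$, produces exactly (\ref{CurvN2Eq}).

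The sectional-curvature identity (\ref{SecCurvNpEq}) then follows by contracting (\ref{CurvN2Eq}) against a $g_p$-orthonormal pair $h,k$. The first summand reproduces $\tfrac1{V^p}\sec^{\gE}(h,k)$, while for the bracketed symmetric tensor $T=V^{2p-2}g^{\flat_p}\otimes g^{\flat_p}-\tfrac n2V^{p-1}g_p$ the relations $g_p(h,h)=g_p(k,k)=1$, $g_p(h,k)=0$ annihilate the mixed terms of the Kulkarni--Nomizu product and leave $(g_p\KN T)(h,k,k,h)=-\big(T(h,h)+T(k,k)\big)$; substituting $T(h,h)=V^{2p-2}\Npsquared gh-\tfrac n2V^{p-1}$ (and the same in $k$) gives (\ref{SecCurvNpEq}).

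The difficulty here is bookkeeping rather than conceptual. One must fix once and for all mutually compatible sign and normalization conventions for the $(0,4)$-tensor, for the Kulkarni--Nomizu product $\KN$, and for the conformal-change formula; the crucial consistency check is that the single minus sign produced by the Kulkarni--Nomizu contraction is exactly what reconciles the $+$ of the tensorial identity (\ref{CurvN2Eq}) with the $-$ of the scalar identity (\ref{SecCurvNpEq}). A secondary technical point is the Fr\'echet setting: the Hessian computations should be organized with constant (affine) vector fields $h,k$, for which $D_hk=0$ and $[h,k]=0$, so that only derivatives of the volume-dependent scalars and of the tautological field $g$ survive. As an independent cross-check that avoids the conformal-change formula altogether, one can feed the explicit connection $\nabla^{g_p}$ computed in the proof of Lemma \ref{VolumePowerLinearLemma} directly into $R^{g_p}(h,k)l=\nabla^{g_p}_h\nabla^{g_p}_kl-\nabla^{g_p}_k\nabla^{g_p}_hl$ for constant $h,k,l$ and recover the same tensor.
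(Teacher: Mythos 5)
Your proposal is correct and follows essentially the same route as the paper: apply the conformal-change formula for the curvature tensor to $g_p=e^{2f}\gE$ with $f=-\tfrac p2\log V$, compute the Hessian $\nabla^{\gE}df$ (the paper does this directly for $p=1$ via the identity $\nabla^{\gE}_h g=\tfrac n4 h$ and then scales by $p$, while you first establish $\nabla^{\gE}dV=\tfrac n8\gE$ and apply the chain rule -- an equivalent bookkeeping choice), and then contract the Kulkarni--Nomizu term against a $g_p$-orthonormal pair to obtain the sectional curvature. All coefficients, including the factor $2p-p^2$ and the sign from the Kulkarni--Nomizu contraction, match the paper's computation.
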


\begin{proof}[Proof of Proposition \ref{CurvgpProp}]
The formula for the curvature under the conformal 
change $\gE \mapsto e^{2f}\gE$ is \cite[p. 58]{B},
\begin{equation}
\label{ConformalCurvatureEq}
\begin{aligned}
R &=\frac1V 
\big(
R^{\gE}
+\gE\KN(\nabla^{\gE} df-df\otimes df+\half|df|_{\hbox{\sml E}}^2\,\gE)
\big),
\end{aligned}
\end{equation}
and this applies in infinite dimensions as can be verified from its proof.  (Note that our convention is
$R(X,Y)Z=(\nabla_X\nabla_Y-\nabla_Y\nabla_X - \nabla_{[X,Y]})Z$,
the opposite of Besse's.)
Let $f(g):=-\frac12 \log V_g$ and 
$f_p(g):= p f(g)=-\frac p2\log V_g$. 
Assume first that $p=1$.
We claim that
\begin{equation}
\label{nablagEEq}
\nabla^{\gE} d f=\frac{1}{8}\NE g{\,\cdot\,}\NE g{\,\cdot\,}-\frac n{16}\gNE
=2df\otimes df-\frac n{16}\gNE.
\end{equation}
To see this, compute using the formula $\nabla^{\gE}_h g = \frac{n}{4}
h$ (cf.~the proof of Proposition \ref{VFactorProp}) and the metric
property of $\nabla^{\gE}$
to deduce
\begin{equation}\label{fHess}
\begin{aligned}
\nabla^{\gE} df(h,k) &
=(\nabla^{\gE}_hdf)(k)
=\nabla^{\gE}_h(df(k))-df(\nabla^{\gE}_hk)
\cr
&=
-\quarter\nabla^{\gE}_h
\left(
  \frac{1}{V} \E g k
\right)+\quarter\NE g{\nabla^{\gE}_hk}
\cr
&=
\eigth\NE gh\NE gk-\nsixtienth\NE hk-\quarter\NE g{\nabla^{\gE}_hk}+\quarter\NE g{\nabla^{\gE}_hk}
\cr
&=
\eigth\NE gh\NE gk-\nsixtienth\NE hk.
\end{aligned}
\end{equation}
Second, note that $df=-\quarter g^{\flat_1}$, 
$
\NEsq{df}=\NEsq{\nabla f}=\sixtienth\NEsq g=\nsixtienth,
$
and
$\NEsq{df}\gNE=\Esq{df}\gE$. Thus,
  \begin{equation}
\label{CurvgNEq}
    R^{\gN} = \frac{1}{V} R^{\gE} +\frac1{16}\gNE \KN 
     \left( g^{\flat_1} \otimes g^{\flat_1} - \frac{n}{2} \gN \right),
  \end{equation}

To conclude the proof, note now that
$
\nabla^{\gE} d f_p
=
p\nabla^{\gE} d f
=
2pdf\otimes df-\frac {pn}{16}\gNE,
$
$df_p\otimes df_p=p^2df\otimes df,$ and
$\half|df_p|_{\hbox{\sml E}}^2\,\gE
=\frac{p^2}{2}|df|_{\hbox{\sml E}}^2\,\gE=\frac{p^2 n}{16}\gN$.
From (\ref{ConformalCurvatureEq}) we thus obtain
$$
R^{g_p}=
\frac{1}{V^p} R^{\gE} 
+
(2p-p^2)g_p \KN 
\left(df \otimes df - \frac{n}{32} V^{p-1}g_p \right).
$$
Since $df=-\quarter\N g\cdot=-\quarter g^{\flat_1}=-\quarter V^{p-1}g^{\flat_p}$,
(\ref{CurvN2Eq}) follows.

Next, recall that 
$$G\KN H(a,b,c,d)
=
G(a,c)H(b,d)+G(b,d)H(a,c)
-G(a,d)H(b,c)-G(b,c)H(a,d).
$$
So if $h$ and $k$ are $g_p$-orthonormal,
$
g_p \KN g_p(h,k,k,h)=2(h, k)_p^2-2|h|_p^2 |k|_p^2=-2,
$
and
$$
\begin{aligned}
g_p \KN (g^{\flat_p}\otimes g^{\flat_p})(h,k,k,h) &=
2(h, k)_p (g^{\flat_p}\otimes g^{\flat_p})(h,k)
\cr
&\qquad
{}-(h, h)_p (g^{\flat_p}\otimes g^{\flat_p})(k,k)
-(k, k)_p (g^{\flat_p}\otimes g^{\flat_p})(h,h)
\cr
&
=
-(g, k)_p^2
-(g, h)_p^2.
\end{aligned}
$$
By definition, $\h{\rm sec}^{g_p}(h,k)=g_p(R^{g_p}(h,k)k,h)$, 
and so (\ref{SecCurvNpEq}) follows. 
\end{proof}

\begin{proof}[Proof of Proposition \ref{MiracleProp}]
Let $f=f(V_g)$ be a smooth function on $\calM$. 
Then $df=f'dV=\half f'\E g{\,\cdot\,}$, or
$\nabla^{\gE} f=\half f'g$ and 
$\half\Esq{\nabla^{\gE} f}\gE=\frac n8(f')^2 V\gE$,
while $df\otimes df=\quarter (f')^2g^{\flat_{\Esml}}\otimes g^{\flat_{\Esml}}$.
A computation similar to \eqref{fHess} gives
$\nabla^{\gE}df
=
\quarter f''g^{\flat_{\Esml}}\otimes g^{\flat_{\Esml}}
+\frac n8f'\gE
$. So 
$$
R^{g_f}=e^{2f}R^{\gE}
+\quarter e^{2f}\gE\KN
\left(
(f''-(f')^2)
g^{\flat_{\Esml}}\otimes g^{\flat_{\Esml}}
+\frac n2(f'+V(f')^2)\gE
\right),
$$
and analogously to the proof of \eqref{SecCurvNpEq}, we may compute
$$
\h{\rm sec}^{g_f}(h,k)
=
e^{2f}\h{\rm sec}^{\gE}(h,k)
-
\frac{f''-(f')^2}{4e^{4f}}
(\Nfsquared gk+\Nfsquared gh)
-\frac n{4e^{2f}}(f'+V(f')^2).
$$
Suppose now that $\h{\rm sec}^{g_f}=e^{2f}\h{\rm sec}^{\gE}$.  Then
considering directions $h,k$ tangent to $\calM_\mu$ gives that
$f'+V(f')^2=0$,
from which it follows that either
$f=-\log V+C$, i.e., $g_f=e^{2C}\gE/V^2$, or else $f'=0$, i.e.,
$g_f=e^{2C}\gE$.
\end{proof}

\subsection{Conformal transformations and a duality map}

By Proposition \ref{CurvgpProp}, $g_{2-p}$ and $g_p$ have 
the same curvature tensor, up to a conformal factor. 
Here we observe that there is 
also a conformal diffeomorphism $F:\calM\ra\calM$ that 
relates these two metrics, so they
are in fact isometric, and in this sense $g_2$ 
does not provide a new geometry compared to $\gE$.

Consider the map $F:\calM\ra\calM$ defined
by $F(g):=V^{q} g$.
Let $h\in T\calM$ be a constant vector field.
Then
$$
\begin{aligned}
dF(h)
&=
\frac d{dt}\Big|_{t=0}(g+th)V_{g+th}^{q}
=V^{q}h+\half q\E gh V^{q-1}g.
\end{aligned}
$$
Hence, a careful computation shows that
$$
\begin{aligned}
g_p(dF(h),dF(k))|_{F(g)}
& =
g_{p+\frac n2 q(p-1)}(h,k)+
\big(\quartern q^2+q\big)
V^{(1-p)(1+\frac n2 q)-2}\E gh\E gk.
\end{aligned}
$$

To summarize, we have:

\begin{prop}
\label{IsometryProp}
The diffeomorphism $F(g)=V^{-\frac4n} g$ of $\calM$
is an isometry between $(\calM,g_{2-p})$ and
$(\calM,g_p)$, and we have $V_{F(g)} = V_g^{-1}$ and $F^{-1} = F$.
In particular, $(\calM,g_2)$ and $(\calM,\gE)$
are isometric.
\end{prop}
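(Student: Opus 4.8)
The plan is to treat the general formula for $g_p(dF(h),dF(k))|_{F(g)}$ displayed just above as the engine, and to read off the proposition by a single judicious choice of the exponent $q$ in $F(g)=V^q g$. That formula writes the pulled-back metric as a sum of two pieces: a ``metric'' piece $g_{p+\frac n2 q(p-1)}(h,k)$ and a rank-one correction $(\frac n4 q^2+q)V^{(1-p)(1+\frac n2 q)-2}\E gh\E gk$. The isometry onto $g_{2-p}$ will follow the moment I can arrange, with a single $q$, that the first index equals $2-p$ and the coefficient of the second piece vanishes.

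First I would impose $p+\frac n2 q(p-1)=2-p$. Rearranging gives $\frac n2 q(p-1)=2-2p=-2(p-1)$, hence $\frac n2 q=-2$, i.e.\ $q=-\frac4n$, and crucially this is independent of $p$. Then I would check that this same $q$ annihilates the correction coefficient: $\frac n4 q^2+q=\frac n4\cdot\frac{16}{n^2}-\frac4n=\frac4n-\frac4n=0$. With both conditions met the formula collapses to $g_p(dF(h),dF(k))|_{F(g)}=g_{2-p}(h,k)|_g$, which is precisely the statement that $F$ is an isometry from $(\calM,g_{2-p})$ to $(\calM,g_p)$.

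Next I would dispatch the auxiliary claims. Since $F(g)=V^q g$ rescales $g$ by the \emph{constant} $V_g^q$, the volume form transforms as $dV_{F(g)}=(V_g^q)^{n/2}\,dV_g$, so $V_{F(g)}=V_g^{qn/2+1}$; with $q=-\frac4n$ the exponent is $-2+1=-1$, giving $V_{F(g)}=V_g^{-1}$. The involution property then drops out: $F(F(g))=V_{F(g)}^q\,F(g)=(V_g^{-1})^q V_g^q\,g=g$, so $F=F^{-1}$ and in particular $F$ is a diffeomorphism. Finally, specializing the isometry to $p=2$ identifies $(\calM,g_0)=(\calM,\gE)$ with $(\calM,g_2)$, the concluding assertion.

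The genuine work, and the step I expect to be the main obstacle, lies in establishing the general two-term formula itself rather than in the specialization. The difficulty is that $dF(h)=V^q h+\half q\,\E gh\,V^{q-1} g$ carries a pure-trace component along the tautological field $g$, so expanding $\gE(dF(h),dF(k))$ generates the cross terms governed by $\E hg$, $\E gk$ and $\E gg=\int_M\tr(\ginv g\,\ginv g)\,dV_g=nV_g$. One then has to combine these with the homothety scaling of the Ebin metric, $\gE(\,\cdot\,,\,\cdot\,)|_{V^q g}=V^{\frac n2 q-2q}\gE(\,\cdot\,,\,\cdot\,)|_g$, and divide by $V_{F(g)}^p=V^{(\frac n2 q+1)p}$; the delicate part is tracking the many powers of $V$ so that they coalesce into the single exponents $p+\frac n2 q(p-1)$ and $(1-p)(1+\frac n2 q)-2$ recorded above.
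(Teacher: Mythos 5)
Your proposal is correct and follows essentially the same route as the paper: the paper likewise derives the two-term formula for $g_p(dF(h),dF(k))|_{F(g)}$ with $F(g)=V^qg$ and then reads off the proposition by taking $q=-\frac4n$, which simultaneously sends the index $p$ to $2-p$ and kills the coefficient $\frac n4q^2+q$. Your verifications of $V_{F(g)}=V_g^{-1}$, $F\circ F=\mathrm{id}$, and the specialization $p=2$ are exactly the intended ones, and your sketch of how the displayed formula is obtained (expanding $dF(h)$, using $\E gg=nV$ and the homothety scaling of $\gE$) matches the ``careful computation'' the paper alludes to.
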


It is interesting to note that using this result one obtains rather
effortlessly the solution of the geodesic equation for $g_2$, building
on the much simpler one for $\gE$ (\cite[Thm.~3.2]{GM},
\cite[Thm.~2.3]{FG}).
In fact, a direct
solution of the $g_2$ geodesic equation using the fact that the
inverse of the volume is quadratic (Corollary \ref{VolumeGpGeodCor})
is substantially more involved.

\begin{remark}
{\rm
If $\phi$ is a positive differentiable function,
and $F(g):=\phi(V_g) g$, then 
$$
(dF(h),dF(k))_p|_{F(g)}=\frac{\phi^{n/2}(V_{F(g)})}{V^{p}}\E hk|_g
+
V_{F(g)} \E gh\E gk \frac{\phi^{n/2} \phi'}{\phi}
\Big(\frac{n\phi'}{4\phi}V+1\Big).
$$
Hence, the only such map $F$ that is an isometry between
$\gE$ and any $g_p$ is given by Proposition \ref{IsometryProp}.

}
\end{remark}

\section{Geometry of the generalized Calabi metric}
\label{GeomgN}

In this section, we study the geometry of the metric $\gN$
in more detail.  In the first subsection we solve its
geodesic equation for any given initial data.  
In the second
subsection, we compute the sectional curvature, and
examine the extrinsic geometry of certain submanifolds in the spirit of \cite{CR},
showing that the Riemannian analogues of the space of \K metrics are 
totally geodesic. These spaces are naturally isometric (via the Calabi--Yau
Theorem) to the usual spaces of \K metrics.
These facts, together with the explicit formula for geodesics, 
give a precise meaning to the statement that $\gN$ generalizes Calabi's
geometry on the space of \K metrics.

\subsection{Geodesics}
\label{gNGeodEqSubsection}

From (\ref{gNLCConnectionEq}) we obtain the geodesic equation
for $(\calM,\gN)$,

\begin{equation}\label{gNGeodEq}
(\ginv g_t)_t=
\frac14\tr(\ginv g_t\ginv g_t)\delta
-\frac12\tr(\ginv g_t)\ginv g_t
+\frac12\NE{g_t}g\ginv g_t
-\frac14\NEsq{g_t}\delta,
\end{equation}
where $\delta$ denotes the Kronecker tensor corresponding
to the identity matrix.
The last two terms are the new terms compared to
the geoedesic equation for $\gE$. Since they are non-local,
the solution of the equation becomes substantially more involved
and requires making use the `constant of motion' of the geodesic
flow found in (\ref{ConstantMotionEq}).

The solution of the initial value problem for the geodesic equation 
is given by the following theorem.

\begin{theo}\label{gNGeod}
  Let $g(0) \in \mathcal{M}$, and let $\mu_0 := dV_{g(0)}$.  Then the
  geodesic in $(\mathcal{M}, \gN)$ emanating from $g(0)$, with initial
  tangent vector $(\alpha, A) \in T_{\mu_0} \Vol(M) \times T_{g(0)}
  \mathcal{M}_{\mu_0}$, is given by
  the following.

  Define $\sigma := \normN{(\alpha, A)}$ and
  \begin{equation*}
    \begin{aligned}
      a_0 &:= \frac{2}{V(0)} \int_M \alpha, & b_0 &:= \sqrt{\frac{n\sigma^2 - a_0^2}{4}},
      & q &:= \frac{\alpha}{\mu_0} - \frac{a_0}{2}, & r &:=
      \sqrt{\frac{n}{4} \tr\big((g(0)^{-1} A)^2\big)}. 
    \end{aligned}
  \end{equation*}

  First, if $b_0 = 0$, then $g(t, x) = e^{t \sigma \sqrt{n}} g(0,x)$.

  If $b_0 \neq 0$, then for each $x \in M$,
      \begin{multline}\label{AneqZeroGeod}
    g(t, x) =
    \left(
      \frac{1}{2}
      \left(
        1 - \frac{q^2 + r^2}{b_0^2}
      \right)
      \cos(b_0 t)
      + \frac{q}{b_0} \sin(b_0 t)
      + \frac{1}{2}
      \left(
        1 + \frac{q^2 + r^2}{b_0^2}
      \right)
    \right)^{\frac{2}{n}} \\
    \cdot e^{a_0 t / n} g(0) \exp
    \left[
      \frac{2}{r}
      \tan^{-1}
      \left(
        \frac{r \sin(b_0 t)}{b_0 + b_0
          \cos(b_0 t) + q \sin(b_0 t)}
      \right)
      g(0)^{-1} A
    \right].
  \end{multline}
  Here, we take the exponential term to be the identity if $A(x) = 0$.
                                
  If $A(x) \neq 0$, then in \eqref{AneqZeroGeod}, arctangent takes
  values in $\left[ \pi k - \frac{\pi}{2}, \pi k + \frac{\pi}{2}
  \right]$ if $t \in \left[ \frac{2 \pi (k-1) + \theta}{b_0}, \frac{2
      \pi k + \theta}{b_0} \right]$ for $k \in \ZZ$, where
  \begin{equation*}
    \theta(x) :=
    \begin{cases}
      2 \pi - \cos^{-1} \left( \frac{q(x)^2 - b_0^2}{q(x)^2 +
          b_0^2} \right) & \textnormal{if } q(x) \geq 0, \\
      \cos^{-1} \left( \frac{q(x)^2 - b_0^2}{q(x)^2 + b_0^2} \right) &
      \textnormal{if } q(x) < 0.
    \end{cases}
  \end{equation*}
  (Here, arccosine takes values in $[0, \pi]$.)

  The domain of definition of $g(t)$ is $[0, \infty)$ if $b_0 = 0$.
  If $b_0 \neq 0$, the domain of definition is $[0, t_0)$, where $t_0$
  is the infimum of $\theta(x)$ at points where $A(x) = 0$.  (We take
  the infimum to be $\infty$ if there are no such points.)  In the
  case where the geodesic exists for only finite time, it approaches a
  limit point on the boundary of $\mathcal{M} \subset \Gamma(S^2 T^*
  M)$ as $t \rightarrow t_0$; i.e., $\mu_{g(t)}(x) \rightarrow 0$ for at 
  least one point $x \in M$.
                    \end{theo}

\begin{remark}
{\rm

Theorem \ref{gNGeod} 
gives a precise meaning to the statement that $\gN$ generalizes Calabi's
geometry on the space of \K metrics. Indeed, on the level of volume
forms, Calabi's geodesics in the space of \K metrics (or, via the
Calabi--Yau Theorem, on the space of volume forms with total volume $v$)
are given by 
$$
dV_{g(t)}
= 
dV_{g}
\Big( 
G\sqrt v \sin\big(\half t/\sqrt v\big)+\cos\big(\half t/\sqrt v\big)
\Big)^2,
$$
where $(dV_{g(0)})_t=GdV_{g(0)}$ \cite[Remark 5.7]{CR}.
On the other hand, in proving (\ref{AneqZeroGeod}) one shows that the volume forms along
$\gN$-geodesics satisfy an equation of a similar 
form---see (\ref{VolFormSoln})---and the two equations can actually
be shown to exactly coincide  
when $A\equiv0$ and $a_0=0$ 
by using trigonometric formulas and carefully identifying the integration constants.
}
\end{remark}

Before we give the proof of this theorem, let us point out a contrast to
the Ebin metric.  Like the case of $\gE$ (cf.~\cite[\S 2]{FG}),
geodesics in $(\mathcal{M}, \gN)$ exist for all time if $A(x) \neq 0$
for all $x \in M$.  However, the converse of this statement also
holds---if $A(x) = 0$ for some $x \in M$, then the geodesic only
exists for finite time---unless $b_0 = 0$.  (In the case of $\gE$,
this happens only when there is a point where $A(x) =0$ and $(\alpha /
\mu_0)(x) < 0$.)

Note also that, as in the case of the Ebin metric, any conformal
class---a submanifold of the form $\mathcal{P} g$ with $g \in
\mathcal{M}$---is totally geodesic, as can be seen
from Theorem \ref{gNGeod} by putting $A\equiv0$.

We will solve the geodesic equation in the following subsections,
beginning with general considerations and then considering various
special cases.

\subsubsection{The general case}
\label{sec:general-case}

We let $C:=\ginv g_t$ and decompose into pure trace and traceless
parts: $C=:E+\frac fnI$, with $\tr E=0$, i.e., $f=\tr C$. From
\eqref{gNGeodEq}, we obtain the pair of coupled equations
\begin{equation}
\label{FirstGeodEquationNE}
E_t=-\frac12fE+\frac E{2V}\int_M fdV_g,
\end{equation}
and
\begin{equation}
\label{SecondGeodEquationNE}
f_t=\frac {n}4\tr(E^2)-\frac{f^2}4-\frac {n\sigma^2}4+\frac f{2V}\int_M fdV_g,
\end{equation}
where $\sigma = \normN{g_t}$ , which is constant since $g$ is a geodesic.
The last term in the first equation and the last two terms in the
second equation are new compared to the unnormalized metric.

The following relations hold between $E$, $f$, and data related to the
splitting $\mathcal{M} \cong \calV \times \mathcal{M}_{\mu_0}$, where
$\mu_0 := dV_{g_0}$ (cf.~\S \ref{PrelimSec}).  We write
$g=\Big(\frac{\mu_g}{\mu_0}\Big)^{2/n}h$, where $\mu_g = dV_g$ and
$h\in\calM_{\mu_0}$, i.e., $h$ is the unique metric conformal to $g$
with $dV_h=\mu_0$.  Then $\ginv
g_t=h^{-1}h_t+\frac2n\frac{(\mu_g)_t}{\mu_g} I$, implying that
$E=h^{-1}h_t$ and $f=2\frac{(\mu_g)_t}{\mu_g}$.

We define
$$
\phi:=f-\frac1V\int_M f \, dV_g.
$$
Note that $V^{-1} \integral{M}{}{f}{dV_g} = 2 \frac{d}{dt} (\log
V_{g(t)})$.  Hence, by Corollary \ref{HessianLogVCor}, this quantity
is constant along $g(t)$.  So defining
\begin{equation*}
  a_0 := \frac{1}{V(0)}\int_M f(0) \, dV_{g(0)},
\end{equation*}
we have $\phi = f - a_0$ and $\phi_t = f_t$.

Now, note that
\begin{equation}
-\frac{f^2}4+\frac{f}{2V}\int_M f
=
-\frac14\phi^2+\frac14\Big(\frac1V\int_Mf\Big)^2.
\end{equation}
Using this, together with the considerations of the previous
paragraph, we can rewrite
(\ref{FirstGeodEquationNE})--(\ref{SecondGeodEquationNE}) in terms of
$\phi$,
\begin{equation}
\label{ModifiedFirstGeodEqNE}
E_t=-\frac\phi2 E,
\end{equation}
\begin{equation}
\label{ModifiedSecondGeodEqNE}
\phi_t
=
\frac n4\tr (E^2)-\frac {n\sigma^2}4-\frac{\phi^2}4+\frac{a_0^2}4. \end{equation}

Note that 
$$
(\tr (E^2))_t=2\tr(E_t E)
=
\left(V^{-1}\int_MfdV_g-f\right)\tr (E^2)=-\phi\,\tr (E^2).
$$
(so
$
\tr (E^2) = \exp\big(\int_0^t(V^{-1}\int_MfdV_g-f)ds\big)\tr (E^2(0))).
$
Hence, differentiating (\ref{ModifiedSecondGeodEqNE}) yields
$$
\phi_{tt}=-\frac n4 \phi\tr (E^2)-\frac12\phi\phi_t,
$$
and substituting for $\tr (E^2)$ using (\ref{ModifiedSecondGeodEqNE}) 
we obtain
\begin{equation}
\label{SecondModifiedSecondGeodEqNE}
4\phi_{tt}+6\phi\phi_t+\phi^3
=
\phi(a_0^2-n\sigma^2).
\end{equation}

We now let
\begin{equation}\label{pVolForm}
  p:=\frac{\mu_g}{\mu_0}\,e^{-a_0t/2}.
\end{equation}
It follows that $2p_t/p=2(\mu_g)_t/\mu_0-a_0 = \phi$.  Thus, the
left-hand side of \eqref{SecondModifiedSecondGeodEqNE} equals
$8p_{ttt}/p$, hence $p$ satisfies
\begin{equation}\label{pttt}
4p_{ttt}-(a_0^2-n\sigma^2)p_t=0.
\end{equation}
Let 
$$
b_0:= \sqrt{\frac{n\sigma^2-a_0^2}4}.
$$
Note that $b_0$ is well-defined, since $\sigma = \normN{g_t(0)}$ and so
\begin{equation}\label{a0LTnSq}
  \begin{aligned}
  \sigma^2 &= \frac{1}{V(0)} \int_M \tr(C(0)^2) \, dV_{g(0)} \geq \frac{1}{n V(0)} \int_M
  f(0)^2 \, dV_{g(0)} \\
  &\geq \frac{1}{n}
  \left(
    \frac{1}{V(0)} \int_M f(0) \, dV_{g(0)}
  \right)^2 = \frac{a_0^2}{n},
\end{aligned}
\end{equation}
where the second inequality is Cauchy--Schwarz.  Note that the first
inequality is an equality if and only if $E(0) \equiv 0$, and the
second inequality is an equality if and only if $f$ is constant.
Therefore, $b_0 = 0$ if and only if $g_t(0) = \lambda g(0)$ for some
$\lambda \in \R$.

Now, integrating \eqref{pttt}, we have
\begin{equation}\label{pODE}
p_{tt}+b_0^2 p=C,
\end{equation}
for some $C\in\RR$. It follows that
\begin{equation}\label{pCases}
p(t)
=
\begin{cases}
C_1\cos(b_0 t)
+C_2\sin(b_0 t)
+C_3, & \h{ if \ } b_0 \neq 0,
\cr
C_1 t^2 + C_2 t + C_3, & \h{ if \ } b_0 = 0.
\end{cases}
\end{equation}
By \eqref{pVolForm}, then,
\begin{equation}\label{VolFormSoln}
\frac{\mu_g}{\mu_0}
=
\begin{cases}
(C_1 \cos(b_0 t)
+ C_2 \sin(b_0 t)
+ C_3) e^{a_0t/2}, & \h{ if \ } b_0 \neq 0,
\cr
(C_1 t^2 + C_2 t + C_3)e^{a_0t/2}, & \h{ if \ } b_0 = 0.
\end{cases}
\end{equation}

We now consider the initial value data needed to determine the
constants of integration.  Note that \eqref{pVolForm}
implies that $p(0) = 1$ and $p_t(0) = \alpha / \mu_0 - a_0 / 2$.

To determine $p_{tt}(0)$, we first use that $\phi = 2 p_t / p$ to see
that on the one hand,
\begin{equation*}
  \phi_t(0) = 2
  \left(
    \frac{p_t}{p}
  \right)_t(0) = 2 \frac{p_{tt}(0) p(0) - (p_t(0))^2}{p(0)^2} = 2
  p_{tt}(0) - 2
  \left(
    \frac{\alpha}{\mu_0} - \frac{a_0}{2}
  \right)^2.
\end{equation*}
On the other hand, we see by \eqref{ModifiedSecondGeodEqNE} and the
fact that $f(0) = 2 \alpha / \mu_0$ that
\begin{equation*}
  \begin{aligned}
    p_{tt}(0) &= \frac{n}{4} \tr(E(0)^2) - \frac{n\sigma^2}{4} - 
    \frac{\phi^2}{4}
    + \frac{a_0^2}{4} + 2
    \left(
      \frac{\alpha}{\mu_0} - \frac{a_0}{2}
    \right)^2 \\
    &= \frac{n}{4} \tr\big((g(0)^{-1} A)^2\big) - \frac{1}{4}
    \left(
      2 \frac{\alpha}{\mu_0} - a_0
    \right)^2 - b_0^2 + 2
    \left(
      \frac{\alpha}{\mu_0} - \frac{a_0}{2}
    \right)^2 \\
    &= \frac{1}{2} (q^2 + r^2 - b_0^2),
  \end{aligned}
\end{equation*}
where $q := \alpha / \mu_0 - a_0 / 2$ and
$r := \sqrt{\frac{n}{4} \tr\big((g(0)^{-1} A)^2\big)}$.

This gives all the information needed to solve for $\mu_g / \mu_0$ in
the individual cases.  To solve for $h$, we must use
\eqref{ModifiedFirstGeodEqNE} and the fact that $\phi = 2 p_t / p$ to
see that $E_t = - (\log p)_t E$, implying $E = E(0) / p = g(0)^{-1} A
/ p$.  Since $E = h^{-1} h_t$, this gives
\begin{equation}\label{hpEqn}
  h^{-1} h_t(t) = g(0)^{-1} A / p(t).
\end{equation}

We now give the solution of the geodesic equation for each special
case.

\subsubsection{The case $b_0 = 0$}
\label{sec:b0Zero}

In this case, we have $a_0 = \sigma \sqrt{n}$ and $g_t(0) = \lambda g(0)$ for
some $\lambda \in \R$, as noted after \eqref{a0LTnSq}.  Therefore, $A
\equiv 0$ and $q \equiv 0 \equiv r$, implying $p_t(0) \equiv 0 \equiv
p_{tt}(0)$.  This gives, in light of \eqref{pCases}, $C_1 = 0 = C_2$,
and $C_3 = 1$.  Thus, $\mu_g / \mu_0 = e^{\sigma \sqrt{n} t / 2}$ by
\eqref{VolFormSoln}, and $h(t) = g(0)$ by \eqref{hpEqn}.  The solution of the geodesic
equation in this case now follows.

\subsubsection{The case $b_0 \neq 0$, $A(x) = 0$}
\label{sec:AZero}

Here, \eqref{pCases} implies that
\begin{equation*}
  \begin{aligned}
    C_1 &= \frac{1}{2}
    \left(
      1 - \frac{q^2}{b_0^2}
    \right), & C_2 &= \frac{q}{b_0}, &
    C_3 &= \frac{1}{2}
    \left(
      1 + \frac{q^2}{b_0^2}
    \right),
  \end{aligned}
\end{equation*}
and thus
\begin{equation*}
  \frac{\mu_g}{\mu_0} =
  \frac{1}{2}
  \left(
    \left(
      1 - \frac{q^2}{b_0^2}
    \right)
    \cos(b_0 t)
    + 2 \frac{q}{b_0} \sin(b_0 t) +
      1 + \frac{q^2}{b_0^2}
  \right)
  e^{a_0 t / 2}.
\end{equation*}

As in the previous case, since $A(x) = 0$, \eqref{hpEqn} gives
$h(t) = g(0)$, so the solution of the geodesic equation in
this case follows.

It remains only to determine the domain of definition of $g(t)$.  The
equation \eqref{AneqZeroGeod} implies $g(t)$ is a smooth Riemannian
metric unless the coefficient of $g(0)$ in that equation vanishes at
some point $x \in M$, which happens if and only if $p(t, x) = 0$.

To see when this occurs in the case we are considering, set $a :=
\cos(b_0 t)$, so that $\sin(b_0 t) = \pm \sqrt{1 - a^2}$.  Setting
$p(t, x)$ equal to zero then leads to the quadratic equation
\begin{equation*}
  \left(
    1 - 2 \frac{q^2}{b_0^2} + \frac{q^4}{b_0^4}
  \right)
  a^2 + 2
  \left(
    1 - \frac{q^4}{b_0^4}
  \right)
  a +
  \left(
    1 + 2 \frac{q^2}{b_0^2} + \frac{q^4}{b_0^4}
  \right)
  = 4 \frac{q^2}{b_0^2} (1 - a^2),
\end{equation*}
or
\begin{equation*}
  \left(
    \left(
      1 + \frac{q^2}{b_0^2}
    \right)
    a +
    \left(
      1 - \frac{q^2}{b_0^2}
    \right)
  \right)^2 = 0.
\end{equation*}
Plugging the solution $\cos(b_0 t) = a = \frac{q^2 - b_0^2}{q^2 +
  b_0^2}$ back into the original equation gives that $\sin(b_0 t)$
must be negative if $q > 0$, and positive if $q < 0$.  Therefore,
letting arccosine take values in $[0, \pi]$, we have that $p(t, x) =
0$ if and only if
\begin{equation}\label{tZero}
  t =
  \begin{cases}
    \frac{1}{b_0}
    \left(
      2 \pi k - \cos^{-1}
      \left(
        \frac{q^2 - b_0^2}{q^2 + b_0^2}
      \right)
    \right),\ k \in \mathbb{Z}, & \textnormal{if}\ q \geq 0, \\
    \frac{1}{b_0}
    \left(
      2 \pi k + \cos^{-1}
      \left(
        \frac{q^2 - b_0^2}{q^2 + b_0^2}
      \right)
    \right),\ k \in \mathbb{Z}, & \textnormal{if}\ q < 0.
  \end{cases}
\end{equation}
We also note that $p(t, x)$ is periodic in $t$, is zero for exactly
one value of $t$ in each period, and is positive for $t = 2 \pi k
/ b_0$.  Therefore, $p(t, x)$ is nonnegative for all $t$.

\subsubsection{The case $A(x) \neq 0$}
\label{sec:ANotZero}

Similarly to the last case, we can compute the constants
$C_1$, $C_2$, and $C_3$ to find
\begin{equation*}
  \frac{\mu_g}{\mu_0} =
  \frac{1}{2}
  \left(
    \left(
      1 - \frac{q^2 + r^2}{b_0^2}
    \right)
    \cos(b_0 t)
    + 2 \frac{q}{b_0} \sin(b_0 t) +
      1 + \frac{q^2 + r^2}{b_0^2}
  \right)
  e^{a_0 t / 2}.
\end{equation*}

Either by integrating \eqref{hpEqn} or directly verifying that the
following solves that equation, one sees that in this case,
\begin{equation}\label{hSoln1}
  \begin{aligned}
  h(t,x) &= g(0, x) \exp
  \left(
    \left( \integral{0}{t}{p(s)^{-1}}{ds} \right) g(0, x)^{-1} A(x)
  \right) \\
  &=
    g(0,x) \exp
  \left[
    \frac{2}{r}
    \left(
      \tan^{-1}
      \left(
        \frac{q}{r} + \frac{q^2+r^2}{b_0 r}
        \tan \left(
          \frac{b_0}{2} t
        \right)
      \right)
    \right.
  \right. \\
  &\qquad \qquad \qquad \qquad \left.
      - \tan^{-1}
      \left(
        \frac{q}{r}
      \right)
    \right)
    g(0)^{-1} A
  \bigg].
      \end{aligned}
\end{equation}
Using the sum formula for arctangent and the half-angle formula for
tangent, we can write this more elegantly as
\begin{equation}\label{hSoln}
  h(t,x) = g(0,x) \exp
  \left[
    \frac{2}{r}
    \tan^{-1}
    \left(
      \frac{r \sin(b_0 t)}{b_0 + b_0
        \cos(b_0 t) + q \sin(b_0 t)}
    \right)
    g(0, x)^{-1} A(x)
  \right].
\end{equation}

As in the last case, \eqref{AneqZeroGeod} implies that $g(t)$ is a
smooth Riemannian metric unless the coefficient of $g(0)$ is
nonpositive.  We claim that in this case, $p(t, x) > 0$ for all $t$, 
implying the coefficient is always positive at $x$.  To see this, we
write 
\begin{equation*}
  p(t, x) = \frac{r^2}{2 b_0} (1 - \cos(b_0 t)) + \frac{1}{2}
  \left(
    \left(
      1 - \frac{q^2}{b_0^2}
    \right)
    \cos(b_0 t)
    + 2 \frac{q}{b_0} \sin(b_0 t) +
    1 + \frac{q^2}{b_0^2}
  \right).
\end{equation*}
Since $r > 0$ in this case, the first term (involving $r$) is always
nonnegative, and it is zero exactly when $t$ is an integer multiple of
$2 \pi / b_0$.

On the other hand, the second term (involving $q$) is formally exactly
the same as $p(t, x)$ from the previous case.  In particular, it is
always nonnegative, and is zero exactly for those values of $t$ given
in \eqref{tZero}.  But this shows that when the first term is zero,
the second term is positive, and vice versa.  Therefore $p(t, x) > 0$
for all $t$.

Finally, to be precise, we must specify the branch of arctangent for
various ranges of $t$ in \eqref{hSoln}.  That entails determining when
the argument of arctangent in \eqref{hSoln} becomes unbounded, and so
we begin by finding when the denominator is zero.  Again substituting
$a := \cos(b_0 t)$ and setting the denominator equal to zero leads to
the quadratic equation
\begin{equation*}
  b_0^2 (1 + a)^2 = q^2 (1 - a^2),
\end{equation*}
which has solutions $a_1 = \frac{q^2 - b_0^2}{q^2 + b_0^2}$ and $a_2 =
-1$.  These two solutions coincide if $q = 0$, and if $q \neq 0$, then
the argument of arctangent in \eqref{hSoln} approaches $r / q$ as $t
\rightarrow \pi = \cos^{-1}(-1)$.  Therefore the argument remains
bounded in this case, and so we are only interested in $a_1$.
Substituting $\cos(b_0 t) = a_1$ and $\sin(b_0 t) = \pm \sqrt{1 -
  a_1^2}$ into $b_0 + b_0 \cos(b_0 t) + q \sin(b_0 t) = 0$ shows that
in this case, $\sin(b_0 t)$ must be negative if $q > 0$ and positive
if $q < 0$.  Note also that as $b_0 t$ approaches $a_1$ from below,
the argument of arctangent in \eqref{hSoln} approaches $+\infty$.
Thus, the branch of arctangent jumps as $t$ approaches the values
\begin{equation}\label{tInfty}
  t =
  \begin{cases}
    \frac{1}{b_0}
    \left(
      2 \pi k - \cos^{-1}
      \left(
        \frac{q^2 - b_0^2}{q^2 + b_0^2}
      \right)
    \right),\ k \in \mathbb{Z}, & \textnormal{if}\ q \geq 0, \\
    \frac{1}{b_0}
    \left(
      2 \pi k + \cos^{-1}
      \left(
        \frac{q^2 - b_0^2}{q^2 + b_0^2}
      \right)
    \right),\ k \in \mathbb{Z}, & \textnormal{if}\ q < 0.
  \end{cases}
\end{equation}
Since $p(t, x) > 0$ for all $t$, the integral $\integral{0}{t}{p(s,
  x)^{-1}}{ds}$ is strictly increasing; therefore, the branch of
arctangent in \eqref{hSoln} ``jumps upwards'' at each value of $t$ in
\eqref{tInfty}.

This completes the analysis of the final case in Theorem
\ref{gNGeod}.

\subsection{Curvature and relation with Calabi's space of \K metrics}

We next restate Proposition \ref{CurvgpProp} in the case $p=1$.

\begin{theo}\label{NormCurv}
  The curvature tensor of $\gN=\gE/V$ is given by
  \begin{equation}
\label{CurvNEq}
    R^{\gN} = \frac{1}{V} R^{\gE} +\frac1{16}\gNE \KN 
     \left( g^\flat \otimes g^\flat - \frac{n}{2} \gN \right),
  \end{equation}
where $g^\flat$ is the 1-form dual to the tautological vector field $g$
with respect to $\gN$.
Let $h$ and $k$ be unit tangent vectors with $\N hk=0$ and $\NEsq h=\NEsq k=1$.
The sectional curvature of the plane $\RR\{h,k\}$ is
\begin{equation}
\label{SecCurvNEq}
\h{\rm sec}^{\gN}(h,k)
=
\frac1V\h{\rm sec}^{\gE}(h,k)
-\frac{{\N gk}^2}{16}-\frac{{\N gh}^2}{16}+\frac n{16}.
\end{equation}
\end{theo}

A conformal class $\calP g$ is totally geodesic (put $A\equiv0$ in (\ref{AneqZeroGeod})).
However, unlike the Ebin metric, it is no longer
flat, and its curvature now changes sign.  Furthermore, since
$\operatorname{sec}^{\gE}$ is nonpositive \cite[Corollary 1.17]{FG}, the
sectional curvature of $\gN$ is bounded above by $\frac{n}{16}$.

Let $(M,J,\o)$ be a compact closed \K manifold of complex dimension
$m=n/2$. Denote by $\calH$ the space of \K metrics cohomologous to
$\o$.  The higher-dimensional Riemannian analogue of $\calH$ is the
space of metrics of fixed volume $v$ within a conformal class, $\calP
g\cap \calM_v$ (where $\calM_v:=\{g\,:\, V_g=v\}$); in fact, these notions
coincide for Riemann surfaces, while in higher dimensions, using the
Calabi--Yau Theorem \cite{Y}, $\calH$
is isometric to $\calP g\cap \calM_v$ 
\cite[\S4.2]{CR}. Now, $\calH$ is not totally geodesic in $(\calM,\gE)$
\cite[\S3]{CR}.
Yet it has
constant positive curvature in the induced metric. This geometry on
$\calH$ is called Calabi's geometry \cite{C1,C2} (see also
\cite{Ca,CR}).  The following corollary describes another sense
(in addition to Theorem \ref{gNGeod}) in
which $\gN$ generalizes Calabi's geometry on the space of \K metrics.
It is one of our motivations in introducing the metric $\gN$.

\begin{cor}
\label{HTotallyGeodCor}
The space of metrics of fixed volume within a conformal class $\calP
g\cap \calM_v$ is totally geodesic in $(\calM,\gN)$, and has constant
curvature $\frac n{16}$.  In particular, when $M$ is a Riemann surface
the space of \K metrics $\calH$ is a totally geodesic portion of a
sphere in $(\calM,\gN)$.
\end{cor}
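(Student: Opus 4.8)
The plan is to deduce both assertions by exhibiting $\calP g\cap\calM_v$ as the intersection of two totally geodesic submanifolds, and then to read off its intrinsic curvature from the ambient sectional curvature of $\gN$ via the Gauss equation. First I would record that this is a clean intersection: a tangent vector at $g$ lies in $T(\calP g\cap\calM_v)$ exactly when it is simultaneously pure-trace (tangent to the conformal class $\calP g$) and volume-preserving (tangent to $\calM_v$), i.e.\ of the form $\rho g$ with $\int_M\rho\,dV_g=0$, so that $T(\calP g\cap\calM_v)=T\calP g\cap T\calM_v$.

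For the total geodesy, by Corollary \ref{MvTotGeod} the level set $\calM_v$ is totally geodesic, and by Theorem \ref{gNGeod} (setting $A\equiv0$) each conformal class $\calP g$ is totally geodesic. Hence a $\gN$-geodesic whose initial velocity is tangent to $\calP g\cap\calM_v$ is tangent to each factor, so it remains in $\calP g$ and in $\calM_v$, and therefore in their intersection; by the clean-intersection identity above this is precisely total geodesy. As a cross-check one can instead specialize the explicit solution (\ref{AneqZeroGeod}) to $A\equiv0$ and $a_0=0$: the traceless part stays frozen, so the metric remains conformal to $g(0)$, while $V$ is constant by Corollary \ref{HessianLogVCor}.

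For the curvature, total geodesy makes the second fundamental form vanish, so by the Gauss equation the intrinsic sectional curvature of $\calP g\cap\calM_v$ equals the ambient $\operatorname{sec}^{\gN}$ on tangent $2$-planes. I would then feed such a plane into formula (\ref{SecCurvNEq}) of Theorem \ref{NormCurv}, choosing a $\gN$-orthonormal basis $h,k$ of the plane, which we may do inside $T(\calP g\cap\calM_v)$ since the defining constraints are linear. Two simplifications then occur. Since $h,k$ are volume-preserving they are $\gN$-orthogonal to the tautological field $g$---which is the $\gN$-gradient of $2\log V$ (proof of Proposition \ref{VFactorProp})---so $\N gh=\N gk=0$. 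Since $h,k$ are conformal (pure-trace) directions, $\operatorname{sec}^{\gE}(h,k)=0$, because conformal classes are flat for the Ebin metric \cite{FG}; equivalently, the Ebin curvature is a pointwise algebraic expression in $g^{-1}h,g^{-1}k$, which are commuting scalar matrices along a conformal direction. Equation (\ref{SecCurvNEq}) therefore collapses to $\operatorname{sec}^{\gN}(h,k)=n/16$ for every tangent $2$-plane, giving constant curvature $n/16>0$.

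Finally, for a Riemann surface $n=2$, and $\calH$ coincides with $\calP g\cap\calM_v$ (via the Calabi--Yau identification recalled before the statement \cite{CR}); being totally geodesic of constant curvature $\tfrac18>0$, it is an open portion of a round sphere. The point requiring genuine care---and which I regard as the crux---is the curvature step: verifying that both correction terms in (\ref{SecCurvNEq}) vanish on the relevant planes, namely the orthogonality $\N gh=0$ forced by the fixed-volume constraint and the $\gE$-flatness $\operatorname{sec}^{\gE}(h,k)=0$ forced by the conformal constraint. The total-geodesy step is comparatively routine once the clean-intersection identity is in hand.
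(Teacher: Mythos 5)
Your proposal is correct and follows essentially the same route the paper intends: total geodesy via the intersection of the totally geodesic submanifolds $\calM_v$ (Corollary \ref{MvTotGeod}) and $\calP g$ (Theorem \ref{gNGeod} with $A\equiv0$), and the curvature value $n/16$ read off from \eqref{SecCurvNEq} using $\N gh=\N gk=0$ on volume-preserving directions and the Ebin-flatness of conformal classes (cf.\ Remark \ref{HCurvTotGeod}, which applies the same formula for general $p$). Your explicit attention to the clean-intersection identity and the two vanishing correction terms fills in details the paper leaves implicit, but introduces no new ideas beyond the paper's argument.
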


In fact, for $p=1$ and $m=1$, $\calH\subset \calM$ is the intersection
of the totally geodesic submanifolds $\calM_v$ (cf.~Corollary
\ref{MvTotGeod}) and $\calP g$.

In other words, $\gN$ equips $\calM$ with a geometry for which
the ``Riemannian K\"ahler spaces" $\calP g\cap \calM_v$ (which are
isometric to $\calH$) are totally geodesic
portions of spheres, and in this sense extends Calabi's geometry
to the whole of $\calM$.

\begin{remark}\label{HCurvTotGeod}
{\rm
By (\ref{SecCurvNpEq}), the space $\calP g\cap \calM_v$ has constant curvature
$\frac{np(2-p)}{16V^{1-p}}$
in $(\calM,g_p)$.
However, by adapting the proof of \cite[Proposition 3.1]{CR}, one may readily 
show that this space is no longer totally geodesic for $p\ne 1$.
In a related vein, but with a little more work, one may also show that $\calH$ is no longer totally geodesic
for $\gN$ when $m>1$.

}
\end{remark}

\section{The distance functions and the metric completions}
\label{distance-functions}

In this section, we analyze the distance function $d_p$ of $g_p$,
especially in comparison with the much better-studied distance
function $\dE$ of the Ebin metric.  These distance functions are
defined in the usual way as the infimum of lengths of piecewise
differentiable curves between two points.

Our main result gives one further way that the metric $\gN$ is
distinguished among the family considered in this article.  Namely, the
(metric) completion of $(\mathcal{M}, \dN)$ is strictly smaller than
that of any other $d_p$.  In fact, we will see that for each $p$, the
completion of $(\mathcal{M}, d_p)$ is given by a quotient of the space
of symmetric $(0, 2)$-tensors that are measurable (as sections of $S^2
T^* M$) and positive semi-definite.  (The quotient is given by
identifying tensors that agree wherever they are positive definite;
equivalent tensors may disagree over a set where they are not positive
definite.)  However, if $p = 1$, then the completion consists only of
such tensors with finite, positive total volume.  If $p < 1$, the
completion contains a point representing all such tensors with zero
volume, and if $p > 1$, the completion contains a ``point at
infinity''.  (For precise statements, we refer to \S
\ref{sec:Completion}.)

In the process of proving the completion result, we will show that
$\dE$ and $d_p$, for $p \neq 1$, are equivalent on subsets of metrics
with fixed bounds on their total volume (\S \ref{sec:Equivalence}).
It turns out that $\dE$ and $\dN$ are also equivalent on such subsets, but only
locally (i.e., on small metric balls).  While we suspect $\dE$ and
$\dN$ are inequivalent when considered on the entirety of such a
subset, we have no proof of this fact as yet.

\subsection{The metric completion}
\label{sec:StatementCplResult}

To state the result about the completions of $(\mathcal{M}, d_p)$ in
each of the cases mentioned above, we must introduce some notation.

\begin{definition}\label{MfDef}
  We denote by $\mathcal{M}_f$ the set of measurable,
  positive-semidefinite sections $g : M \rightarrow S^2 T^* M$ with
  finite total volume.  That is, a section $g \in \mathcal{M}_f$ if
  and only if its restriction to any coordinate charts is a measurable
  mapping between subsets of Euclidean space, $g(x)(X, Y) \geq 0$ for
  any $x \in M$ and any $X, Y \in T_x M$, and $V_g = \int_M dV_g <
  \infty$.  Here, $dV_g$ is as usual given locally by $\sqrt{\det g}
  \, dx^1 \wedge \cdots \wedge dx^n$ (which induces a nonnegative
  measure since $g$ is measurable and positive semidefinite).

  We also define $\widehat{\mathcal{M}_f} := \mathcal{M}_f / {\sim}$.
  The equivalence relation ${\sim}$ is defined by $g \sim h$ if and
  only if the following statement holds almost surely (up to a
  Lebesgue-nullset): $g(x) \neq h(x)$ if and only if $\det g(x) = \det
  h(x) = 0$.
\end{definition}

\begin{remark}
  {\rm We note that the concept of a Lebesgue-nullset on a manifold,
    used in the above definition, is well-defined independently of a
    volume form as a set whose image under any coordinate chart is a
    Lebesgue-nullset in $\RR^n$.  }
\end{remark}

We can now state the result, which we will prove in the remainder of
this section.

\begin{theo}\label{dpCpl}
  The metric completion $\overline{(\mathcal{M}, d_p)}$ of
  $(\mathcal{M}, d_p)$ can be identified with
  \begin{enumerate}
  \item $\widehat{\mathcal{M}_{f+}} := \mathcal{M}_{f+} / {\sim}$ if
    $p = 1$, where $\mathcal{M}_{f+} \subset \mathcal{M}_f$ consists
    of those elements with positive total volume;
  \item $\Mfhat$ if $p < 1$;
  \item $\widehat{\mathcal{M}_{f+}} \cup \{ g_\infty \}$ if $p > 1$,
    where $g_\infty$ is a ``point at infinity'' represented by the
    single equivalence class of Cauchy sequences $\{ h_k \}$ with
    $\lim_{k \rightarrow \infty} V_{h_k} = \infty$.
  \end{enumerate}
  In particular, $\overline{(\calM, \gN)}$ is strictly contained in
  $\overline{(\calM, g_p)}$ for all $p \neq 1$.
\end{theo}

For $p \neq 1$, one can very heuristically view these completions as
cones, where for $p < 1$ (resp.~$p > 1$), metrics with zero
(resp.~infinite) volume are identified to a point.  (Of course, there
are other identifications occurring, so this picture is not very
rigorous.)  In the special, scale-invariant case $p = 1$, this cone is
opened to a cylinder.

We begin proving the above theorem by showing the equivalence result
mentioned at the beginning of the section.

\subsection{The (local) equivalence of $d_p$ and $\dE$}
\label{sec:Equivalence}

In this subsection, we show that $d_p$ and $\dE$ are equivalent
metrics---as long as $p \neq 1$---on any subset of $\mathcal{M}$
satisfying an upper and lower bound on the total volume of any element
in the subset.  Furthermore, we will show that for any $p$, they are
equivalent on small balls (of some uniformly positive radius) in such
subsets.  To do so, we first show that the function sending a metric
to its total volume is continuous on $(\mathcal{M}, d_p)$ for any $p$.
This allows us to prove the uniform local equivalence for any $p$
mentioned above.  Following that, we state a result that, in
particular, implies that subsets of metrics with certain bounds on
their total volumes have bounded diameter with respect to both $d_p$
and $\dE$, for $p \neq 1$.  (It is at this point that the proof fails
for $p = 1$; however, we do not yet know whether $p \neq 1$ is an
essential assumption.)  A simple metric space argument then gives the
global equivalence on the subsets we are considering.

We begin this process with the following lemma, which was inspired by
\cite[\S 3.3]{MM2} and generalizes \cite[Lemma 12]{Cl2}.

\begin{lem}\label{VolLipCont}
  Let $g, h \subseteq \mathcal{M}$.  Then
  \begin{equation*}
    d_p(g, h) \geq
    \begin{cases}
      \frac{4}{(1-p)\sqrt{n}} \abs{V_{h}^{\frac{1-p}{2}} - V_{g}^{\frac{1-p}{2}}}, & p \neq 1, \\
      \frac{2}{\sqrt{n}} \abs{\log\left( \frac{V_h}{V_g}
        \right)}, & p = 1.
    \end{cases}
  \end{equation*}
  In particular, the function $g \mapsto V_g$ is continuous on
  $(\mathcal{M}, d_p)$.
\end{lem}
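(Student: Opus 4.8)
The plan is to bound the length of an arbitrary piecewise differentiable path from below and then pass to the infimum, since $d_p$ is by definition the infimal $g_p$-length of such paths. Concretely, I would fix a path $\{g(t)\}_{t\in[0,1]}$ from $g$ to $h$ and estimate the rate of change of the total volume along it. Using the identity recalled in Section \ref{PrelimSec} that the differential of $g\mapsto V_g$ is $h\mapsto\half\E g h$, one has $\frac{d}{dt}V_{g(t)}=\half\int_M\tr(\ginv g_t)\,dV_g$, so that $\big|\frac{d}{dt}V\big|\le\half\int_M|\tr(\ginv g_t)|\,dV_g$.

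The crux is to relate this integral to the $g_p$-speed $\sqrt{g_p(g_t,g_t)}=V^{-p/2}\sqrt{\E{g_t}{g_t}}$ by two applications of Cauchy--Schwarz. Pointwise, writing $S=\ginv g_t$, which is $g$-self-adjoint with real eigenvalues, the scalar Cauchy--Schwarz inequality applied to the eigenvalues gives $(\tr S)^2\le n\,\tr(S^2)$. Integrating and then using Cauchy--Schwarz once more in $L^2(M,dV_g)$ yields $\int_M|\tr S|\,dV_g\le V^{1/2}\big(\int_M(\tr S)^2\,dV_g\big)^{1/2}\le\sqrt n\,V^{1/2}\big(\int_M\tr(S^2)\,dV_g\big)^{1/2}=\sqrt n\,V^{1/2}\sqrt{\E{g_t}{g_t}}$. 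Combining these gives the uniform bound $\big|\frac{d}{dt}V\big|\le\frac{\sqrt n}{2}V^{(1+p)/2}\sqrt{g_p(g_t,g_t)}$, valid along any path.

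The remaining step is to choose the function of the volume that absorbs the factor $V^{(1+p)/2}$, turning the right-hand side into a constant multiple of the speed. For $p\ne1$ the right choice is $\psi(V)=V^{(1-p)/2}$, for which $\frac{d}{dt}\psi=\frac{1-p}{2}V^{-(1+p)/2}\frac{dV}{dt}$ and hence $\big|\frac{d}{dt}V^{(1-p)/2}\big|\le\frac{|1-p|\sqrt n}{4}\sqrt{g_p(g_t,g_t)}$; for $p=1$ the correct substitution is $\log V$, giving $\big|\frac{d}{dt}\log V\big|\le\frac{\sqrt n}{2}\sqrt{g_1(g_t,g_t)}$. Integrating over $t\in[0,1]$, the left side dominates $|\psi(V_h)-\psi(V_g)|$ while the right side integrates to the stated constant times the $g_p$-length of the path; taking the infimum over all paths yields the two displayed inequalities (the stated coefficient $\frac{4}{(1-p)\sqrt n}$ being understood as $\frac{4}{|1-p|\sqrt n}$, consistent with the outer absolute value and with $d_p\ge0$).

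I do not expect a serious obstacle here: the two Cauchy--Schwarz estimates are routine, and the only genuine design choice—hence what I would regard as the guiding idea rather than a difficulty—is identifying the normalizing function $\psi$ that makes the power of $V$ cancel. Finally, the continuity of $g\mapsto V_g$ on $(\calM,d_p)$ follows immediately, since $\psi$ (either $V\mapsto V^{(1-p)/2}$ or $V\mapsto\log V$) is a homeomorphism of $\RR_{>0}$ onto its image, so the Lipschitz control of $\psi\circ V$ with respect to $d_p$ transfers directly to continuity of $V$ itself.
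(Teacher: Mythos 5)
Your proposal is correct and follows essentially the same route as the paper: bound $\partial_t V$ along an arbitrary path by the $g_p$-speed via the two Cauchy--Schwarz steps (your pointwise eigenvalue inequality $(\tr S)^2\le n\tr(S^2)$ is exactly the paper's orthogonal trace/trace-free decomposition, and your $L^2(M,dV_g)$ step is the paper's H\"older step), then integrate the normalized quantity $V^{(1-p)/2}$ (resp.\ $\log V$) and take the infimum over paths. Your remark that the constant should be read as $\frac{4}{|1-p|\sqrt n}$ is a fair and correct reading of the statement.
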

\begin{proof}
  Let $\gamma(t)$, $t \in [0,1]$, be any path from $g$ to $h$, and
  define $k(t) := \gamma_t(t)$.  We
  compute
  \begin{equation}\label{eq:26}
      \partial_t V_{\gamma(t)} = \frac{1}{2} \int_M \tr(\gamma^{-1} k)
      \, dV_\gamma \leq \frac{1}{2} \sqrt{V_\gamma} \left( \int_M
        (\tr(\gamma^{-1} k))^2 \, dV_\gamma \right)^{1/2},
  \end{equation}
  where we have used H\"older's inequality in the second line.

  Let $k_0(t)$ denote the trace-free part of $k(t)$.  By the
  orthogonality of traceless and trace-free matrices in the
  Hilbert--Schmidt product $\langle A, B \rangle = \tr(AB^T)$, and
  since $k = k_0 + \frac{1}{n} \tr(g^{-1} k) \gamma$, we have
  \begin{equation*}
    (\tr(\gamma^{-1} k))^2 = n \left( \tr((\gamma^{-1} k)^2) -
      \tr((\gamma^{-1} k_0)^2) \right) \leq n \tr((\gamma^{-1} k)^2).
  \end{equation*}
  Applying this to (\ref{eq:26}) gives
  \begin{equation*}
      \partial_t V_{\gamma(t)} \leq \frac{1}{2} \sqrt{V_\gamma}
      \left( n \int_M \tr((\gamma^{-1} k)^2) \, dV_\gamma \right)^{1/2}
      \leq \frac{\sqrt{n}}{2} \sqrt{V_\gamma} \abs{k}_E =
      \frac{\sqrt{n}}{2} V_{\gamma(t)}^{\frac{1+p}{2}} \abs{k}_p.
  \end{equation*}

  Now, let $p \neq 1$.  We estimate
  \begin{equation}\label{eq:49}
    \begin{aligned}
      V_{h}^{\frac{1-p}{2}} - V_{g}^{\frac{1-p}{2}} &= \int_0^1 \partial_t
      V_{\gamma(t)}^{\frac{1-p}{2}} \, dt = \frac{1-p}{2}\int_0^1 \partial_t
        V_{\gamma(t)} V_{\gamma(t)}^{\frac{-1-p}{2}} \, dt \\
      &\leq \frac{(1-p)\sqrt{n}}{4} \int_0^1 \abs{k(t)}_p \, dt 
      = \frac{(1-p)\sqrt{n}}{4} L_p(\gamma).
    \end{aligned}
  \end{equation}
  Since this inequality holds for all paths from $g$ to $h$, and we can
  repeat the computation with $g$ and $h$ interchanged, it implies
  the result for $p \neq 1$.   The case $p = 1$ follows analogously to \eqref{eq:49} if one begins
  with the quantity $\log(V_{h}) - \log(V_{g})$ on the left-hand side.
\end{proof}

The following is an immediate corollary, and hints at the completions
described in the introduction to this section.

\begin{cor}\label{CauchyVolConv} 
  If $\{ h_k \} \subset \mathcal{M}$ is a $d_p$-Cauchy sequence, then
  $\{ V_{h_k} \}$ converges in $\R_+ \cup \{ 0 \}$ {\rm(}for $p < 1${\rm)},
  $\R_+$ {\rm(}for $p = 1$\h{\rm)}, or $\R \cup \{ +\infty \}$ {\rm(}for $p > 1${\rm)}.
\end{cor}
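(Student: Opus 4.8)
The plan is to deduce this corollary directly from Lemma~\ref{VolLipCont}, whose estimates already give a one-sided control of volume change in terms of $d_p$-length. The key observation is that the lemma furnishes, for each fixed $p$, a \emph{continuous, strictly monotone} function $\Phi_p$ of the volume such that
\begin{equation*}
  d_p(g,h)\ \geq\ \frac{4}{\sqrt{n}\,|1-p|}\,\bigl|\Phi_p(V_h)-\Phi_p(V_g)\bigr|,
  \qquad \Phi_p(v):=v^{\frac{1-p}{2}}\ (p\neq 1),\quad \Phi_1(v):=\log v.
\end{equation*}
Thus, if $\{h_k\}$ is $d_p$-Cauchy, then $\{\Phi_p(V_{h_k})\}$ is a Cauchy sequence of real numbers (the lemma shows it is Lipschitz in the $d_p$-distance, hence Cauchy whenever $\{h_k\}$ is), and so it converges to some real limit $L$. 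The corollary is then just the statement that $\Phi_p$ maps the target convergence back to the asserted ranges for $V_{h_k}$ itself.

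Concretely, I would argue case by case according to the sign of $1-p$. \textbf{(i)} For $p=1$, $\Phi_1=\log$ is a homeomorphism $\R_+\to\R$, so $\log V_{h_k}\to L$ forces $V_{h_k}\to e^{L}\in\R_+$. \textbf{(ii)} For $p<1$, the exponent $\frac{1-p}{2}$ is positive, so $\Phi_p:\R_+\to\R_+$ is a homeomorphism onto $(0,\infty)$; a Cauchy sequence $\Phi_p(V_{h_k})\to L\ge 0$ then yields $V_{h_k}\to L^{2/(1-p)}$, where the boundary value $L=0$ corresponds precisely to collapse $V_{h_k}\to 0$, giving convergence in $\R_+\cup\{0\}$. \textbf{(iii)} For $p>1$, the exponent is negative, so $\Phi_p(v)=v^{-(p-1)/2}$ is a decreasing homeomorphism $\R_+\to\R_+$ with $\Phi_p(v)\to 0$ as $v\to\infty$; hence $\Phi_p(V_{h_k})\to L\ge 0$ gives either $V_{h_k}\to L^{-2/(p-1)}\in\R_+$ when $L>0$, or $V_{h_k}\to+\infty$ when $L=0$. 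This accounts for the claimed limit set $\R\cup\{+\infty\}$ (the value $0$ being excluded since $V_{h_k}>0$ throughout, and only the one extra limit point $+\infty$ arising at the boundary $L=0$).

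The only subtlety, and the point I would be most careful about, is the handling of the \emph{boundary value} $L=0$ of the transformed sequence, since $L$ lies in the closure of the image of $\Phi_p$ but may fail to be attained. For $p<1$ this simply adds the collapse limit $0$ to $\R_+$, and for $p>1$ it adds the blow-up limit $+\infty$; in both cases the extra endpoint is exactly the one announced in the statement, and it is attained as an honest limit in the extended real line precisely because $\Phi_p$ extends continuously (and monotonically) to the closed half-line. Everything else is routine: the inequality of Lemma~\ref{VolLipCont} shows $t\mapsto\Phi_p(V_{h_t})$ is Cauchy, completeness of $\R$ gives the limit $L$, and inverting the explicit monotone function $\Phi_p$ transports this to convergence of $V_{h_k}$ in the stated one-point compactification of $\R_+$. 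No new estimates beyond the lemma are needed; the corollary is essentially a reformulation of its content through the change of variable $v\mapsto\Phi_p(v)$.
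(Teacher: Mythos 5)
Your argument is correct and is exactly the intended one: the paper presents this corollary as an immediate consequence of Lemma \ref{VolLipCont} (with no further proof given), and your case-by-case inversion of the monotone function $\Phi_p$ is precisely the spelling-out of that deduction. The only cosmetic point is that the constant in the lemma should be read as $\frac{4}{|1-p|\sqrt{n}}$, as you have done.
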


Lemma \ref{VolLipCont} also yields the following comparison
between $d_p$ and $\dE$.

\begin{cor}\label{dsdEEquivLocal}
  Let $v' > v > 0$ be given.  Define $\mathcal{M}_{v,v'} := \{ g \in
  \mathcal{M} \,:\, v < V_g < v' \}$ Then there exists $\delta =
  \delta(v, v') > 0$ such that if $g \in
  \mathcal{M}_{v,v'}$ and $h \in \mathcal{M}$, then
  \begin{enumerate}
  \item\label{dsLTdE} $\dE(g, h) < \delta$ implies $d_p(g, h) < \max\Big\{ (2v')^{-p},
    \left(\frac{v}{2}\right)^{-p} \Big\} \dE(g, h)$, and
  \item\label{dELTds} $d_p(g, h) < \delta$ implies $\dE(g, h) < 
    \max\left\{ (2v')^p,
    \left(\frac{v}{2}\right)^p \right\} d_p(g, h)$.
  \end{enumerate}
\end{cor}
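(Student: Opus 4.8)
The plan is to exploit the fact that $g_p$ and $\gE$ are pointwise conformal on $\calM$, the conformal factor being a power of the volume function. Since $g_p=\gE/V^p$, every tangent vector $k$ at a point $\gamma\in\calM$ satisfies $\abs{k}_p=V_\gamma^{-p/2}\abs{k}_E$, so for any piecewise-differentiable path $\gamma(t)$, $t\in[0,1]$, its lengths in the two metrics are related by $L_p(\gamma)=\int_0^1 V_{\gamma(t)}^{-p/2}\abs{k(t)}_E\,dt$ (here $L_0(\gamma)$ denotes the $\gE$-length, as $g_0=\gE$). Hence if the volume $V_{\gamma(t)}$ stays in a fixed compact subinterval of $\RR_+$ along $\gamma$, the two lengths differ only by a bounded factor. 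The one genuine difficulty is that $h$ is an \emph{arbitrary} metric in $\calM$, with no a priori control on $V_h$; we therefore cannot pinch the volume globally, only along short paths. This is exactly where the smallness hypotheses $\dE(g,h)<\delta$ (for (\ref{dsLTdE})) and $d_p(g,h)<\delta$ (for (\ref{dELTds})), together with the freedom to choose $\delta=\delta(v,v')$, will be used.

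The key step is a \emph{uniform} volume-pinching estimate valid along near-minimizing paths. Revisiting the proof of Lemma \ref{VolLipCont}, the crucial content is a pointwise-in-$t$ differential inequality: for the Ebin metric one has $\partial_t V_\gamma^{1/2}\le\tfrac{\sqrt n}{4}\abs{k}_E$, and more generally $\partial_t V_\gamma^{(1-p)/2}$ (respectively $\partial_t\log V_\gamma$ when $p=1$) is bounded by a fixed multiple of $\abs{k}_p$. Integrating this over an initial segment $[0,s]$ rather than over all of $[0,1]$ yields, for $p\neq 1$,
\[
\bigl|V_{\gamma(s)}^{(1-p)/2}-V_g^{(1-p)/2}\bigr|\le\tfrac{(1-p)\sqrt n}{4}\,L_p(\gamma),
\]
with the corresponding logarithmic bound for $p=1$ and the $p=0$ version for the Ebin case. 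Thus, whenever $\gamma$ is chosen to be an $\gE$-near-minimizer (for (\ref{dsLTdE})) or a $g_p$-near-minimizer (for (\ref{dELTds})) with total length only slightly larger than $\delta$, the value $V_{\gamma(s)}$ remains in an explicit neighborhood of $V_g\in(v,v')$ for \emph{every} $s\in[0,1]$. Choosing $\delta=\delta(v,v',n,p)>0$ small enough then forces $V_{\gamma(t)}\in[v/2,2v']$ along the entire path.

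Given this pinching, both inequalities follow by directly comparing lengths. For (\ref{dsLTdE}) I would take $\gE$-near-minimizers $\gamma$ from $g$ to $h$; on them $V_{\gamma(t)}^{-p/2}$ is bounded above by its supremum over $[v/2,2v']$ (an explicit constant depending only on $v,v',p$, namely the one recorded in (\ref{dsLTdE})), whence $L_p(\gamma)\le (\sup V^{-p/2})\,L_0(\gamma)$; letting $L_0(\gamma)\to\dE(g,h)$ and taking the infimum over such paths gives the stated bound on $d_p(g,h)$. Statement (\ref{dELTds}) is the mirror image: take $g_p$-near-minimizers, bound $V_{\gamma(t)}^{p/2}$ by its supremum over $[v/2,2v']$, and compare $L_0(\gamma)$ to $L_p(\gamma)$ to obtain the bound on $\dE(g,h)$. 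The constants appearing in the two parts are precisely the extremal values of the conformal factor over the pinched range $[v/2,2v']$. The main obstacle, as indicated above, is the volume-pinching along the whole path for an endpoint $h$ of uncontrolled volume; once that is secured via the segment-wise integration of the estimate from Lemma \ref{VolLipCont}, the remainder is a routine length comparison.
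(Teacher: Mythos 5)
Your proof is correct and follows essentially the same route as the paper: the paper also applies the volume estimate of Lemma \ref{VolLipCont} to the initial segments of a near-minimizing path (phrased as $\dE(g,\gamma(t))<2\delta$ implying $\tfrac{v}{2}<V_{\gamma(t)}<2v'$ for all $t$), and then concludes by the same pointwise comparison of the conformal factors along the path. The only cosmetic discrepancy is the exponent in the constant: since $\abs{k}_p=V^{-p/2}\abs{k}_E$, the natural bound is $\max\{(2v')^{-p/2},(v/2)^{-p/2}\}$ as you indicate, rather than the power $-p$ appearing in the statement, but this affects neither your argument nor the paper's.
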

\begin{proof}
  By Lemma \ref{VolLipCont},
    the function $g
  \mapsto V_g$ is uniformly continuous with respect to both $d_p$ and
  $\dE$ on $\mathcal{M}_{v,v'}$.  So we can choose $\delta$ small
  enough that if $g \in \mathcal{M}_{v,v'}$, $h \in \mathcal{M}$, and either $d_p(g,
  h) < 2 \delta$ or $\dE(g, h) < 2 \delta$, then $\frac{v}{2} <
  V_{h} < 2v'$.

  Let $g$, $h$, and $\delta$ be as above, let $0 < \epsilon < \delta$
  be arbitrary, and let $\{ \gamma(t) \}_{t \in [0,1]}$ be any
  piecewise differentiable path connecting $g$ and $h$ that satisfies
  $\LE(\gamma) < \dE(g, h) + \epsilon$, where we denote by $\LE$ and
  $L_p$ the length with respect to $\gE$ and $g_p$, respectively.
  Since $\dE(g, \gamma(t)) < 2 \delta$ for any $t \in [0,1]$,
  $\frac{v}{2} < V_{\gamma(t)} < 2v'$ for all $t$.  Thus we may
  estimate
  \begin{equation*}
    L_p(\gamma(t)) = \int_0^1 \abs{\gamma_t(t)}_p \, dt = \int_0^1 V^{-p}
    \abs{\gamma_t(t)}_E \, dt \leq \max\Big\{ (2v')^{-p},
    \left(\frac{v}{2}\right)^{-p} \Big\} \LE(\gamma(t)).
  \end{equation*}
  Since $\LE(\gamma(t)) < \dE(g, h) + \epsilon$ and $\epsilon$
  was arbitrarily small, this proves statement \eqref{dsLTdE}.  Statement
  \eqref{dELTds} is then proved completely analogously.
\end{proof}

Since $g_p$ is, as discussed in \S \ref{PrelimSec}, a weak Riemannian
metric, the distance function $d_p$ does not a priori induce a metric
space structure on $\mathcal{M}$ (as it is not a priori positive
definite; the other metric space axioms are automatic).  In fact,
there are examples (e.g., due to Michor--Mumford \cite{MM1, MM2}) of
weak Riemannian manifolds with induced distance between any two points
zero.  However, it has been shown \cite[Theorem 18]{Cl2} that $\dE$ does
induce a metric space structure on $\mathcal{M}$, and so Corollary
\ref{dsdEEquivLocal} gives:

\begin{cor}\label{dpMetricSpace}
  $(\mathcal{M}, d_p)$ is a metric space.
\end{cor}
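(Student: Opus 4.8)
The plan is to observe that, since $d_p$ is defined as the infimum of lengths of piecewise differentiable paths, it automatically satisfies all of the metric space axioms except possibly positive definiteness: nonnegativity and symmetry are immediate from the definition, and the triangle inequality follows by concatenating paths. Thus the entire content of the corollary is to rule out the degenerate situation---which genuinely occurs for some weak Riemannian metrics, as the cited examples of Michor--Mumford \cite{MM1, MM2} show---in which $d_p(g, h) = 0$ for some \emph{distinct} $g, h \in \calM$.

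The substance of the argument is to transfer positive definiteness from $\dE$, which is already known to be a genuine metric by \cite[Theorem 18]{Cl2}. So suppose $g, h \in \calM$ with $d_p(g, h) = 0$. First I would choose volume bounds bracketing $g$, for instance $v := \half V_g$ and $v' := 2 V_g$, so that $g \in \calM_{v,v'}$, and let $\delta = \delta(v, v') > 0$ be the radius furnished by Corollary \ref{dsdEEquivLocal}. Since $d_p(g, h) = 0 < \delta$, part \eqref{dELTds} of that corollary applies directly and yields
$$
\dE(g, h) < \max\Big\{ (2v')^p, \Big(\frac{v}{2}\Big)^p \Big\}\, d_p(g, h) = 0,
$$
whence $\dE(g, h) = 0$. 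The positive definiteness of $\dE$ then forces $g = h$, establishing the required positive definiteness of $d_p$.

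There is really no hard step here: the genuine work has already been extracted into Corollary \ref{dsdEEquivLocal}, whose proof in turn rests on the volume continuity of Lemma \ref{VolLipCont} together with Clarke's theorem that $\dE$ is a metric. The only point demanding the slightest care is that the local comparison in Corollary \ref{dsdEEquivLocal} requires only that the basepoint $g$ lie in a fixed-volume shell $\calM_{v,v'}$, while the second point $h$ is allowed to be arbitrary; this asymmetry is exactly what we exploit, since we are free to choose the shell around $g$ and need no a priori control on $h$. I would therefore expect the write-up to be only a few lines long.
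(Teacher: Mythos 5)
Your proposal is correct and follows exactly the paper's route: the paper likewise derives the corollary immediately from the positive definiteness of $\dE$ (\cite[Theorem 18]{Cl2}) combined with the local comparison of Corollary \ref{dsdEEquivLocal}, and you have merely written out the short argument the paper leaves implicit. The only cosmetic remark is that applying part \eqref{dELTds} with $d_p(g,h)=0$ literally yields a strict inequality $\dE(g,h)<0$; the intended (and, from the proof of Corollary \ref{dsdEEquivLocal}, correct) conclusion is the non-strict bound $\dE(g,h)\le \max\{(2v')^p,(v/2)^p\}\,d_p(g,h)=0$, which is what you use.
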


We now give a proposition that estimates $d_p$ from above in a way
that is, at least in spirit, converse to Lemma \ref{VolLipCont}.  This
proposition allows us to bound the distance between two metrics based
only on their total volumes and the intrinsic volumes of the set on
which they differ.  A direct consequence is a diameter bound for
subsets of metrics satisfying a bound on their total volumes.

\begin{prop}\label{VolUpperBdpNE1}
  Suppose that $g, h \in \mathcal{M}$, and let $E :=
  \operatorname{carr} (h - g) = \{ x \in M \mid g(x) \neq h(x) \}$.
  If $p \neq 1$, then there exists a constant $C(p, n)$, depending
  only on $p$ and $n = \dim M$, such that
  \begin{equation*}
    d_p(g, h) \leq  C(p, n) \cdot \left( V_g^{-p/2} \sqrt{\Vol(E, g)}
      + V_h^{-p/2} \sqrt{\Vol(E,h)} \right).
  \end{equation*}

  In particular, let $0 < v < \infty$.  Then if $p < 1$, we
  have
  \begin{equation*}
    \diam_{d_p} \left( \{ \tilde{g} \in \mathcal{M} \mid \Vol(M,
      \tilde{g}) \leq v \} \right) \leq 2 C(p, n) v^{\frac{1-p}{2}}.
  \end{equation*}
  If $p > 1$, then we have
  \begin{equation*}
    \diam \left( \{ \tilde{g} \in \mathcal{M} \mid \Vol(M, \tilde{g}) \geq
      v \} \right) \leq 2 C(p, n) v^{\frac{1-p}{2}}.
  \end{equation*}
\end{prop}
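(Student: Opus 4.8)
The plan is to bound $d_p(g,h)$ from above by constructing, for each of $g$ and $h$, an explicit path that collapses it along $E$ while fixing it on $M\setminus E$, so that both paths terminate at the common degenerate section that equals the shared value of $g$ and $h$ off $E$ and equals $0$ on $E$. Each of the two legs then accounts for one of the two terms on the right-hand side. For the $g$-leg I would take the curve $\gamma(s)$, $s\in(0,1]$, defined by $\gamma(s)=g$ on $M\setminus E$ and $\gamma(s)=s\,g$ on $E$; this lies in $\calM$ for every $s>0$ and drives the $E$-part of the volume to zero as $s\to0$.

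Along this curve $\gamma^{-1}\gamma_s$ equals $s^{-1}$ times the identity on $E$ and vanishes off $E$, whence $\abs{\gamma_s}_E^2=n\,s^{n/2-2}\Vol(E,g)$ and $V_{\gamma(s)}=U+s^{n/2}W$ with $W:=\Vol(E,g)$ and $U:=V_g-W\ge 0$. Since $\abs{\cdot}_p=V^{-p/2}\abs{\cdot}_E$, the $g_p$-length of the leg is $\sqrt{n}\,\sqrt{W}\int_0^1(U+s^{n/2}W)^{-p/2}s^{n/4-1}\,ds$, and the substitution $u=s^{n/2}$ rewrites this as $\tfrac{2}{\sqrt{n}}\sqrt{W}\int_0^1(U+uW)^{-p/2}u^{-1/2}\,du$. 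The integral is homogeneous of degree $-p/2$ in $(U,W)$, so after normalizing $W=1$ the ratio $\int_0^1(U+u)^{-p/2}u^{-1/2}\,du\big/(U+1)^{-p/2}$ is a continuous function of $U\in[0,\infty)$ with finite endpoint values ($\tfrac{2}{1-p}$ at $U=0$ and $2$ as $U\to\infty$), hence bounded by a constant $c_p$ depending only on $p$. This gives $L_p(g\text{-leg})\le C(p,n)\,V_g^{-p/2}\sqrt{\Vol(E,g)}$, and symmetrically for the $h$-leg.

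The value at $U=0$, i.e.\ the case $E=M$, is finite exactly because $p<1$, and this is the crux of the estimate. To assemble the main inequality for $p<1$ I would truncate both legs at a small scale $s=\delta$ and join $\delta g$ to $\delta h$ on $E$ by the interpolation $\delta\big((1-t)g+th\big)$; the factor $\delta$ cancels in $\gamma^{-1}\gamma_t$, so this connector has Ebin length $O(\delta^{n/4})$ while $V\gtrsim\delta^{n/2}$, making its $g_p$-length $O(\delta^{n(1-p)/4})$, which tends to $0$ as $\delta\to0$. Passing to the limit and using the triangle inequality then yields $d_p(g,h)\le C(p,n)\big(V_g^{-p/2}\sqrt{\Vol(E,g)}+V_h^{-p/2}\sqrt{\Vol(E,h)}\big)$.

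The main obstacle is the range $p>1$: there the normalized integral diverges at $U=0$, and in fact Lemma \ref{VolLipCont} shows that the zero-volume endpoint sits at infinite $d_p$-distance, so no collapsing path can be used. I would handle this by the duality isometry $F(g)=V_g^{-4/n}g$ of Proposition \ref{IsometryProp}, an isometry of $(\calM,g_p)$ onto $(\calM,g_{2-p})$ with $2-p<1$ and $V_{F(g)}=V_g^{-1}$; transporting the already-proven estimate for $2-p$ through $F$ covers the case $p>1$, the weights matching because $V_{F(g)}^{-(2-p)/2}\sqrt{\Vol(\,\cdot\,,F(g))}=V_g^{-p/2}\sqrt{\Vol(\,\cdot\,,g)}$. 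Finally, both diameter statements follow from the main inequality by estimating $\Vol(E,\tilde g)\le V_{\tilde g}$, so that each term is at most $V_{\tilde g}^{(1-p)/2}$, and invoking the monotonicity of $t\mapsto t^{(1-p)/2}$ under the relevant constraint ($V_{\tilde g}\le v$ when $p<1$, $V_{\tilde g}\ge v$ when $p>1$); for $p>1$ it is cleanest to push the $p<1$ diameter bound forward through $F$.
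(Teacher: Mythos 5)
Your argument is, in essence, the paper's own proof in idealized form: the paper likewise collapses $g$ and $h$ conformally over $E$, joins them at the collapsed scale by linear interpolation, and handles $p>1$ by pushing the $p<1$ case through the duality $F$ of Proposition \ref{IsometryProp} (the weight bookkeeping $V_{F(g)}=V_g^{-1}$ and $\Vol(E,F(g))=V_g^{-2}\Vol(E,g)$ is exactly as you have it). Your length computation for the collapsing leg, the homogeneity/normalization argument bounding the integral uniformly in $U=\Vol(M\setminus E,g)$, and the identification of the finiteness of the $U=0$ endpoint as the precise place where $p<1$ enters are all correct, as is the derivation of the two diameter bounds from the main inequality.

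The one genuine issue is that your paths are not admissible: $d_p$ is an infimum over piecewise differentiable paths in $\mathcal{M}$, but the tensor field equal to $s\,g$ on the open set $E$ and to $g$ on $M\setminus E$ is discontinuous across $\partial E$ (and the same applies to your connector at scale $\delta$), so it is not a smooth Riemannian metric. The repair is exactly the machinery of the paper's proof of Proposition \ref{VolUpperBd}, reused in the Appendix for the present proposition: one replaces the indicator-type conformal factor by smooth functions $f_{k,s}$ with $f_{k,s}\equiv s$ on closed sets $F_k\subseteq E$ exhausting $E$ in measure, $f_{k,s}\equiv 1$ off $E$, and $s\le f_{k,s}\le 1$, and then verifies that the contribution of the transition region $E\setminus F_k$ vanishes in the iterated limit $k\to\infty$, $s\to 0$ (this forces a small case distinction on the sign of $\frac n2-2$, i.e.\ $n\le 3$ versus $n\ge 4$, in estimating the conformal powers). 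Since all of your estimates are stable under this approximation, the gap is technical rather than conceptual, but as written the proof is incomplete without it.
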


Since the proof of this proposition is rather lengthy, we postpone
it to the Appendix.

Corollary \ref{dsdEEquivLocal} and Proposition \ref{VolUpperBdpNE1}
imply, with just a little extra work, that $d_p$ ($p \neq 1$) and
$\dE$ are equivalent on the sets $\mathcal{M}_{v,v'}$ defined in
Corollary \ref{dsdEEquivLocal}.

\begin{cor}\label{dsdEEquiv}
  Let $p \neq 1$ and $0 < v, v' < \infty$.  Then $d_p$ and $\dE$ are
  equivalent on $\mathcal{M}_{v,v'}$ (where by $d_p$ and $\dE$ we mean
  the extrinsic distance induced by $g_p$ and $\gE$, respectively, on
  this subset).
\end{cor}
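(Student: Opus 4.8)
The plan is to upgrade the \emph{local} comparison of Corollary \ref{dsdEEquivLocal} to a global one on $\mathcal{M}_{v,v'}$ by combining it with the diameter bounds of Proposition \ref{VolUpperBdpNE1}. The entire argument is a soft metric-space dichotomy between ``small'' and ``large'' distances, and requires no further analysis on $\calM$ itself.

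First I would record that $\mathcal{M}_{v,v'}$ has finite diameter in both metrics. Since $\mathcal{M}_{v,v'}\subset\{\,\tilde g : V_{\tilde g}\le v'\,\}$ when $p<1$ and $\mathcal{M}_{v,v'}\subset\{\,\tilde g : V_{\tilde g}\ge v\,\}$ when $p>1$, Proposition \ref{VolUpperBdpNE1} yields a finite bound $D_p:=\diam_{d_p}(\mathcal{M}_{v,v'})<\infty$ in either case. Applying the same proposition with $p=0$ (note $\dE=g_0$ and $0<1$) to the superset $\{\,\tilde g : V_{\tilde g}\le v'\,\}$ likewise gives $D_E:=\diam_{\dE}(\mathcal{M}_{v,v'})<\infty$. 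Let $\delta=\delta(v,v')>0$ be the constant furnished by Corollary \ref{dsdEEquivLocal}.

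Second I would prove $d_p\le C\,\dE$ on $\mathcal{M}_{v,v'}$ by splitting on the size of $\dE(g,h)$. Fix $g,h\in\mathcal{M}_{v,v'}$. If $\dE(g,h)<\delta$, then part \eqref{dsLTdE} of Corollary \ref{dsdEEquivLocal} gives $d_p(g,h)\le C_1\,\dE(g,h)$ with $C_1=\max\{(2v')^{-p},(v/2)^{-p}\}$; here one uses that $g\in\mathcal{M}_{v,v'}$, which is precisely the hypothesis of that corollary. If instead $\dE(g,h)\ge\delta$, the finite diameter gives $d_p(g,h)\le D_p\le (D_p/\delta)\,\dE(g,h)$. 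Hence $d_p\le\max\{C_1,D_p/\delta\}\,\dE$ throughout $\mathcal{M}_{v,v'}$. The reverse inequality $\dE\le C'\,d_p$ is proved identically, invoking part \eqref{dELTds} of Corollary \ref{dsdEEquivLocal} when $d_p(g,h)<\delta$ and the bound $D_E$ when $d_p(g,h)\ge\delta$. Together these two estimates give the asserted equivalence, with constant $C=\max\{C_1,C_2,D_p/\delta,D_E/\delta\}$.

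The genuine content is thus entirely in the inputs, so the main obstacle lies not in this corollary but upstream: the local equivalence of Corollary \ref{dsdEEquivLocal} (which itself rests on the volume-continuity Lemma \ref{VolLipCont}) and, above all, the diameter estimate of Proposition \ref{VolUpperBdpNE1}, whose proof is deferred to the Appendix. The delicate phenomenon these inputs encode---and the reason the dichotomy has a ``large-distance'' clause at all---is that one must control paths wandering outside the volume window $(v/2,2v')$; such excursions force a definite change of volume, which by Lemma \ref{VolLipCont} costs a definite amount of length when $p\ne1$. For $p=1$ the volume estimate of Lemma \ref{VolLipCont} is only logarithmic and the diameter bound of Proposition \ref{VolUpperBdpNE1} is unavailable, so the large-distance clause of the argument is missing---which is exactly why the proof, and apparently the statement, are restricted to $p\ne1$.
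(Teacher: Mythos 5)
Your proposal is correct and follows essentially the same route as the paper: the local comparison of Corollary \ref{dsdEEquivLocal} handles pairs at small distance, the diameter bound from Proposition \ref{VolUpperBdpNE1} handles pairs at large distance, and the two regimes are glued by the standard small/large dichotomy. The only cosmetic difference is that you obtain the $\dE$-diameter bound by explicitly invoking Proposition \ref{VolUpperBdpNE1} with $p=0$, whereas the paper simply asserts the bound for both metrics; both readings are valid.
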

\begin{proof}
  Let $g, h \in \mathcal{M}_{v,v'}$.
  
  Corollary \ref{dsdEEquivLocal} implies that there exist $\epsilon >
  0$ and $1 \leq \eta < \infty$ such that if either $d_p(g, h) \leq
  \epsilon$ or $\dE(g, h) \leq \epsilon$, then
  \begin{equation}\label{LocalEquivIneq}
    \eta^{-1} d_p(g, h) \leq \dE(g, h) \leq \eta d_p(g, h).
  \end{equation}

  On the other hand, let $\dE(g, h) > \epsilon$; then the preceding
  paragraph gives $d_p(g, h) > \eta^{-1} \epsilon$.  Furthermore,
  Proposition \ref{VolUpperBdpNE1} implies that there exists $D <
  \infty$ such that the diameter of $\mathcal{M}_{v,v'}$ is at most
  $D$ with respect to both $d_p$ and $\dE$, so we also have $\dE(g,
  h), d_p(g, h) \leq D$.  Thus,
  \begin{equation*}
    d_p(g, h) > \eta^{-1} \epsilon = \frac{\eta^{-1} \epsilon}{D} D
    \geq \frac{\eta^{-1} \epsilon}{D} \dE(g, h),
  \end{equation*}
  and
  \begin{equation*}
    d_p(g, h) \leq D = \frac{D}{\epsilon} \epsilon <
    \frac{D}{\epsilon} \dE(g, h).
  \end{equation*}
  This completes the proof.
\end{proof}

\subsection{The completion of $(\calM, d_p)$}
\label{sec:Completion}

Using these results, together with the characterization of the
completion of $(\mathcal{M}, \dE)$ in \cite{Cl3}, we can prove \ref{dpCpl}.

First, though, we need to recall the
completion of $(\mathcal{M}, \dE)$, as determined in \cite{Cl3}.  This
requires some background discussion.

\begin{definition}\label{OmegaDef}
  Let $\mathcal{M}_x := S^2_+ T^*_x M$ denote the set of
  positive-definite $(0, 2)$-tensors at $x \in M$; its tangent spaces
  are given by $T_a \mathcal{M}_x \cong S^2 T^*_x M$.  Define a
  Riemannian metric $\langle \cdot, \cdot \rangle$ on $\mathcal{M}_x$
  by $\langle b, c \rangle_a := \tr(a^{-1} b a^{-1} c) \sqrt{\det
    (\tilde{g}(x)^{-1} a)}$, where $\tilde{g} \in \mathcal{M}$ is any fixed
  reference metric.

  Let $d_x$ denote the distance function of $\langle \cdot, \cdot
  \rangle$ on $\mathcal{M}_x$.  Define a metric (in the sense of
  metric spaces) on $\mathcal{M}$ by
  \begin{equation*}
    \Omega_2(g, h) :=
    \left(
      \integral{M}{}{d_x(g(x), h(x))^2}{dV_{\tilde{g}}}
    \right)^{1/2}.
  \end{equation*}
\end{definition}

It is not hard to see that $\Omega_2$ is indeed a metric, and one can
show that it does not depend on the arbitrary choice of $\tilde{g}$
(see~\cite{Cl5}).  The completion of $(\mathcal{M}_x, d_x)$ is given
by $\operatorname{cl}(\mathcal{M}_x) / \partial \mathcal{M}_x$, that
is, by all positive-semidefinite $(0, 2)$-tensors at $x$, with tensors
that are not positive definite identified to a point.  A sequence $\{
a_k \} \subset \mathcal{M}_x$ converges in the completion to $[0]$,
the equivalence class of the zero tensor, if and only if
$\det(\tilde{g}(x)^{-1} a_k) \rightarrow 0$ \cite[Proposition
18]{Cl5}.  One can use this fact to show that the metric $\Omega_2$
can also be extended in a well-defined way to $\Mfhat$ \cite[\S
4.1]{Cl5}.

In fact, we have the following theorem, which in particular
says that, like curvature and geodesics, the distance between points
(and in a sense the completion) of $(\mathcal{M}, \gE)$ can be
computed ``fiberwise''.

\begin{theo}[{\cite[Theorem~5.17]{Cl3}, \cite[Theorem~22]{Cl5}}]\label{gECompletion}
  For all $g, h \in \mathcal{M}$, $\dE(g, h) = \Omega_2(g, h)$.

  The metric completion $\overline{(\mathcal{M}, \gE)}$ of
  $(\mathcal{M}, \gE)$ is identified with $\Mfhat$.  That is, for each
  $\dE$-Cauchy sequence $\{ h_k \} \subset \mathcal{M}$, there exists
  a unique element $h \in \Mfhat$ such that $\Omega_2(h_k, h)
  \rightarrow 0$.  Furthermore, if $\{ \tilde{h}_k \} \subset
  \mathcal{M}$ is another $\dE$-Cauchy sequence with $\lim_{k
    \rightarrow \infty} d(h_k, \tilde{h}_k) = 0$, then
  $\Omega_2(\tilde{h}_k, h) \rightarrow 0$ as well.
\end{theo}

Using the (local) equivalence of $\dE$ and $d_p$, as well as the
completion of $(\mathcal{M}, \gE)$ as a basis for comparison, we can
now prove Theorem \ref{dpCpl}.

\begin{proof}[Proof of Theorem \ref{dpCpl}]
  We begin with general arguments.  Following that, we treat the
  specifics of each of the three cases.
  
  Let $\{ h_k \}$ be a $d_p$-Cauchy sequence.  By Corollary
  \ref{CauchyVolConv}, $\{ V_{h_k} \}$ converges either to a
  nonnegative real number or infinity.  Let's assume that it converges
  to a positive number.  Then there exist $0 < v \leq v' < \infty$
  such that $\{ h_k \} \subset \mathcal{M}_{v,v'}$ (with notation as in
  Corollary \ref{dsdEEquivLocal}).  But then Corollary
  \ref{dsdEEquivLocal} 
implies that $\{ h_k \}$ is
  $\dE$-Cauchy as well.  Therefore, by Theorem \ref{gECompletion}, $\{
  h_k \}$ $\Omega_2$-converges to a unique limit point $h$ in
  $\widehat{\mathcal{M}_f}$ with $V_h > 0$.  This shows there exists a
  mapping from the set of $d_p$-Cauchy sequences in $\mathcal{M}$ with
  positive volume in the limit to $\widehat{\mathcal{M}_{f+}}$.

  To see that this induces a well-defined mapping from a subset of the
  completion $\overline{(\mathcal{M}, d_p)}$ to $\Mfplushat$, we must
  show that if $\{ h_k \}$ and $\{ \tilde{h}_k \}$ are $d_p$-Cauchy
  sequences with positive volume in the limit and $\lim_{k \rightarrow
    \infty} d_p(h_k, \tilde{h}_k) = 0$, then $\{ h_k \}$ and $\{
  \tilde{h}_k \}$ $\Omega_2$-converge to the same element $h \in
  \Mfplushat$.  But 
 in this case there exist $0 < \tilde{v}
  \leq \tilde{v}' < \infty$ such that $\{ h_k \}$ and $\{ \tilde{h}_k
  \}$ both lie in $\mathcal{M}_{\tilde{v},\tilde{v}'}$, so this is
  implied by Corollary \ref{dsdEEquivLocal} and Theorem
  \ref{gECompletion}.

  On the other hand, the same argument, with the roles of $\dE$ and
  $d_p$ reversed, shows that if $\{ h_k \}$ is a $\dE$-Cauchy
  sequence with $\lim_{k \rightarrow \infty} V_{h_k} > 0$, then $\{
  h_k \}$ is $d_p$-Cauchy.  Therefore, the mapping from this subset of
  $\overline{(\mathcal{M}, d_p)}$ to $\Mfplushat$ is
  surjective.

  To see that the mapping from this subset of $\overline{(\mathcal{M},
    d_p)}$ to $\Mfplushat$ is injective, we must show that if $\{ h_k
  \}$ and $\{ \tilde{h}_k \}$ are Cauchy sequences with positive
  volume in the limit and $\lim_{k \rightarrow \infty} d_p(h_k,
  \tilde{h}_k) \neq 0$, then the $\Omega_2$-limits of $\{ h_k \}$ and
  $\{ \tilde{h}_k \}$ differ.  But as in the proof that the mapping is
  well-defined, this follows from Corollary \ref{dsdEEquivLocal} and
  Theorem \ref{gECompletion}.

  Now, consider the case $p = 1$.  Here, Corollary \ref{CauchyVolConv}
  implies that all Cauchy sequences have positive volume in the limit,
  so the preceding arguments suffice for this case.

  If $p < 1$, the only remaining $d_p$-Cauchy sequences $\{ h_k \}$
  are those for which $\lim_{k \rightarrow \infty} V_{h_k} = 0$, again by
  Corollary \ref{CauchyVolConv}.  To complete the proof of the
  theorem, we must show that if $\{ h_k \}$ and $ \{ \tilde{h}_k \}$
  are two such sequences, then $\lim_{k \rightarrow \infty} d_p(h_k,
  \tilde{h}_k) = 0$.  But this follows from Proposition
  \ref{VolUpperBdpNE1}.

The case $p>1$ follows 
from the case $p<1$ using the
  isometry of Proposition \ref{IsometryProp}.
\end{proof}

\section{Remarks and open questions}
\label{sec:Remarks}

\subsection{(Non-)Control over geometry via $d_p$}
\label{sec:GeomControl}

In \cite[Example 4.17]{Cl4}, it was shown that the metric $\dE$ is too
weak to control, in any reasonable way, various geometric quantities
associated to elements of $\mathcal{M}$.  That is, functions mapping
a metric in $\mathcal{M}$ to its curvature, distance function,
diameter, or injectivity radius are discontinuous, even in some
weakened sense.

In fact, the same examples constructed in op.~cit.~for
$\dE$ are also valid for $d_p$.  To see this, and make it precise, we
give a result analogous to Proposition \ref{VolUpperBdpNE1}, with a
statement weakened in order to handle the case $p = 1$.  It only gives
an upper bound on the distance between metrics that agree as tensors
somewhere on $M$.  On the other hand, if two metrics differ everywhere
(the generic case), this proposition gives no information.

\begin{prop}\label{VolUpperBd}
  Suppose that $g, h \in \mathcal{M}$, and let $E :=
  \operatorname{carr} (h - g) = \{ x \in M \mid g(x) \neq h(x)
  \}$.  Given a measurable subset $A \subseteq M$ and $\tilde{g} \in
  \mathcal{M}$, let 
  \begin{equation*}
    V_{p,\tilde{g}}^A := \max \left\{ V_{\tilde{g}}^{-p/2},
      \Vol(M \setminus A, \tilde{g})^{-p/2} \right\}.
  \end{equation*}
  Then there exists a constant $C(n)$, depending only on $n = \dim M$,
  such that
  \begin{equation*}
    d_p(g, h) \leq  C(n) \cdot \left( V_{p,g}^E \sqrt{\Vol(E, g)}
      + V_{p,h}^E \sqrt{\Vol(E,h)} \right).
  \end{equation*}
          \end{prop}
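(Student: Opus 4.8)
The plan is to prove the inequality by exhibiting an explicit path from $g$ to $h$ and estimating its $g_p$-length, following the scheme behind Proposition \ref{VolUpperBdpNE1} but inserting a volume-bookkeeping step that is uniform in $p$ (and in particular survives at $p=1$). Since $d_p$ is the infimum of lengths of piecewise differentiable curves, it suffices to produce, for every $\epsilon>0$, a path $\gamma$ in $\mathcal{M}$ from $g$ to $h$ whose length satisfies $L_p(\gamma)\leq C(n)\big(V_{p,g}^E\sqrt{\Vol(E,g)}+V_{p,h}^E\sqrt{\Vol(E,h)}\big)+\epsilon$.

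The path I would use keeps the metric equal to $g=h$ on $M\setminus E$ and moves only over $E$, which is open since $g-h$ is continuous. Concretely, I concatenate two conformal-scaling paths through a nearly-degenerate metric: on the first half $\gamma_1$ I shrink $g$ over $E$ by a factor decreasing from $1$ to $0$, and on the second half $\gamma_2$ I grow $h$ over $E$ symmetrically from $0$ back to $1$. Along the pure scaling $s\mapsto s\,g$ over $E$ one has $\gamma^{-1}\gamma_s=s^{-1}I$, so $\tr((\gamma^{-1}\gamma_s)^2)=n/s^2$ and $dV_\gamma=s^{n/2}\,dV_g$ on $E$; hence $|\gamma_s|_E=\sqrt{n}\,s^{n/4-1}\sqrt{\Vol(E,g)}$ and $\LE(\gamma_1)=\int_0^1\sqrt{n}\,s^{n/4-1}\sqrt{\Vol(E,g)}\,ds=\frac{4}{\sqrt{n}}\sqrt{\Vol(E,g)}$, with the analogous bound $\frac{4}{\sqrt n}\sqrt{\Vol(E,h)}$ for $\gamma_2$.

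The key new step is the volume bookkeeping, and this is what yields a $p$-independent constant. Because the metric is frozen on $M\setminus E$ and the scaling factor is monotone, the total volume $V_{\gamma_1(s)}=\Vol(M\setminus E,g)+s^{n/2}\Vol(E,g)$ is monotone and confined to the interval $[\Vol(M\setminus E,g),V_g]$; therefore $V_{\gamma_1(s)}^{-p/2}\leq\max\{V_g^{-p/2},\Vol(M\setminus E,g)^{-p/2}\}=V_{p,g}^E$ for all $s$, and similarly $V_{\gamma_2(s)}^{-p/2}\leq V_{p,h}^E$. Consequently $L_p(\gamma_i)=\int V_{\gamma_i}^{-p/2}|\gamma_{i,s}|_E\,ds\leq V_{p,\cdot}^E\,\LE(\gamma_i)$, and summing gives the claim with $C(n)=4/\sqrt{n}$. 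This monotone-volume device is precisely what replaces the $p$-dependent estimate of Proposition \ref{VolUpperBdpNE1}: it costs only the weaker factor $V_{p,\tilde g}^E$ (the maximum of the two volume powers) and the requirement $E\neq M$ so that $\Vol(M\setminus E,\tilde g)>0$, but it remains valid at $p=1$.

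The main obstacle is technical rather than conceptual. The sharply-cut scaling path is only a measurable section, not a smooth metric, and its nearly-degenerate midpoint lies on $\partial\mathcal{M}$. I would handle this exactly as in the Appendix proof of Proposition \ref{VolUpperBdpNE1}: smooth the cutoff across a thin collar inside $E$, whose contribution to both the length and the volume estimates tends to $0$ as the collar shrinks, and stop the scaling at a small level $s=\epsilon>0$, joining $\epsilon\,g|_E$ to $\epsilon\,h|_E$ by a short connector whose $g_p$-length is $O(\epsilon^{n/4})$ (its volume factor being controlled by the same monotonicity argument). Passing to the limit in $\epsilon$ and in the collar width then yields the stated inequality.
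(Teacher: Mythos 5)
Your proposal is correct and follows essentially the same route as the paper's Appendix proof: conformally scale $g$ down to (nearly) zero over $E$ while freezing it on $M\setminus E$, traverse a short connector whose length vanishes in the limit, then reverse the scaling for $h$, with the volume factor along each leg trapped in $[\Vol(M\setminus E,\cdot),V_\cdot]$ so that $V_{\gamma}^{-p/2}\le V_{p,\cdot}^E$. The paper implements your ``thin collar'' smoothing via cutoffs $f_{k,s}$ equal to $s$ on compact exhaustions $F_k\subseteq E$, and its case split $n\le 3$ versus $n\ge 4$ (yielding $C(n)=\max\{\sqrt n,\,4/\sqrt n\}$ rather than exactly $4/\sqrt n$) is the only residue of that smoothing; otherwise the two arguments coincide.
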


The proof of this proposition is postponed to the Appendix.

In op.~cit.,
taking $M = T^2$, the two-dimensional torus,
several examples of sequences $\{ h_k \} \subset \mathcal{M}$ with the
following properties were constructed:
\begin{itemize}
\item $dV_{h_k} = dV_h$ for all $k \in \NN$, where $h$ denotes the
  standard flat metric on $T^2$ (with both radii equal to 1);
\item for each $k \in \NN$, there exists a set $U_k \subseteq M$ with
  $h_k = h$ off of $U_k$; and
\item $\Vol(U_k, h) \rightarrow 0$ as $k \rightarrow \infty$.
\end{itemize}
The above properties imply, by Proposition \ref{VolUpperBd}, that
$d_p(h_k, h) \rightarrow 0$.  Furthermore, various sequences with the
above properties were constructed so that, depending on the sequence,
\begin{itemize}
\item no curvature quantity of $(M, h_k)$ converges to the
  corresponding quantity for $(M, h)$, even off of some small-measure subset;
\item the distance function induced on $M$ by $h_k$ does not converge
  to that of $h$, either in the Gromov--Hausdorff sense or some sense
  relevant to metric-measure spaces;
\item $\diam(M, h_k)$ does not converge to $\diam(M, h)$; or
\item the injectivity radius of $(M, h_k)$ does not converge to that
  of $(M, h)$, either as a function of $M$ off of some small-measure
  subset, or taking the infimum of this function.
\end{itemize}

Since these examples apply to $d_p$, it seems the advantage of
$d_p$, when considered in the context of convergence of Riemannian
manifolds, is that it eliminates collapse of the metrics over the
entire manifold if $p = 1$.

To the best of our knowledge, it remains an open question to find a
simple Riemannian metric on $\mathcal{M}$ with a distance function
that offers some control over the geometry of elements of
$\mathcal{M}$---for instance, one for which convergence with respect
to the distance function of the Riemannian metric implies
Gromov--Hausdorff convergence (or some other synthetic-geometric
convergence).  While this is certainly the case for Sobolev $H^s$
metrics when $s > n/2$ (cf.~\cite[p.~20]{E} or \cite{BHM}), it might
be the case that there are simpler Riemannian metrics with this desirable
property.  (Compare \cite{MM2,MM1} for analogous examples of this in the
setting of submanifold geometry.)

\subsection{The exponential mapping of $\gN$}
\label{sec:expon-mapp-gn}

It is possible, though a bit tricky (see the next two subsections), to
see that the exponential mapping of $\gN$ is surjective onto any
conformal class, but not onto all of $\mathcal{M}$.  This is also true
for the Ebin metric.  It would be interesting to find a $\operatorname{Diff}(M)$-invariant
geodesically convex Riemannian metric on $\mathcal{M}$, that is, one
for which geodesics exist between any two points.  However, at this
point the authors know of no such metric.

\subsubsection{Conformal classes}
\label{sec:conformal-classes}

Let us now show that for any $g \in \mathcal{M}$, $\exp_g$ is a
diffeomorphism onto the conformal class $\mathcal{P} g$ of $g$, when
restricted to an appropriate open neighborhood of $0$ in $T_g
(\mathcal{P}g)$.  (The same is true for the Ebin metric, as is
immediately apparent from the explicit formula for its exponential
mapping \cite[Theorem 2.3]{FG}, \cite[Theorem 3.2]{GM}.)  We show this
in the remainder of this subsection.

Indeed,  the completion of the
set $\mathcal{V}_v$ of smooth volume forms with fixed total volume $v
= \Vol(M, g)$ is isometric to a section of a sphere in a Hilbert space
when endowed with the metric induced from the Ebin metric via the map
$i_\mu$ (\ref{imuEq}) \cite[\S 4.4]{CR}.  
In particular, one can deduce
that the exponential mapping of $\mathcal{V}_v$ is a
diffeomorphism from a subset of $T_\nu \mathcal{V}_v$ onto
$\mathcal{V}_v$ for any $\nu \in \mathcal{V}_v$.

Consider now the set $\calP g \cap \calM_v = \{ h \in \mathcal{P} g \,
: \, \Vol(M, h) = v \}$.  Since the metric induced by $\gN$ on $\mathcal{P}
g \cap \mathcal{M}_v$ is equal (up to a factor $1/v$) to the Ebin metric, and $i_\mu$ induces
a diffeomorphism between $\calP g \cap \calM_v$ and $\mathcal{V}_v$,
one also sees that the exponential mapping at $g$ of $(\calP g \cap
\calM_v, \gN)$ is a diffeomorphism when restricted to the appropriate
domain.  Furthermore, as noted above Remark \ref{HCurvTotGeod},
$\mathcal{P} g \cap \mathcal{M}_v \subset \mathcal{M}$ and
$\mathcal{P}g \subset \mathcal{M}$ are totally geodesic.  Therefore,
the exponential mapping of $(\mathcal{M}, \gN)$, restricted to vectors
tangent to $\mathcal{P} g \cap \mathcal{M}_v$, coincides with that of $(\calP g \cap
\calM_v, \gN)$.

Now, let notation be as in Theorem \ref{gNGeod}, and let $\{ g(t)
\}_{t \in [0,1]}$ be any geodesic emanating from $g(0) = g$ with
initial tangent vector $(\alpha, 0)$, where $a_0 = \frac{2}{v}
\integral{M}{}{}{\alpha} = 0$; that is, $(\alpha, 0)$ is tangent to
$\calP g \cap \calM_v$ and $\Vol(M, g(t)) = v$ for all $t$.  Let us now
consider the geodesic $\tilde{g}(t)$ emanating from $g(0)$ with
initial tangent vector $(\alpha + \lambda \mu_0, 0)$, where $\lambda
\in \RR$.  One then computes that under this change, $a_0$
becomes $2 \lambda$, but $b_0$, $q$, and $r$ do not change.  Examining
\eqref{AneqZeroGeod}, then, $\tilde{g}(t) =
e^{\lambda t / n} g(t)$.  Since $\mathcal{P} g = \RR_{>0} \cdot
(\calP g \cap \calM_v)$, one deduces that $\exp_g$ is a
diffeomorphism from an appropriate domain in $T_g (\mathcal{P} g)$
onto $\mathcal{P} g$.

\subsubsection{Nonsurjectivity on $\calM$}
\label{sec:nonsurjectivity-calm}

To show that for no $g \in \mathcal{M}$ is $\exp_g$ surjective onto
$\mathcal{M}$, we continue to use the notation of Theorem
\ref{gNGeod}, and consider any geodesic $\{ g(t) \}_{t \in [0, T)}$ with
$g(0) = g$ and $g_t(0) = (\alpha, A)$.  Let $\lVert A(x) \rVert :=
\sqrt{\tr((g(0, x)^{-1} A(x))^2)}$ denote the fiberwise norm of $A$,
and $\overline{A}(x) := A(x) / \lVert A(x) \rVert$ the fiberwise
normalization of
$A$; then $\frac{2}{r} g(0)^{-1} A = \frac{4}{\sqrt{n}} g(0)^{-1}\overline{A}$.

Now, recall that the branch of arctangent in \eqref{AneqZeroGeod}
``jumps upward'' when $t \mapsto t + \frac{2\pi}{b_0}$.  Furthermore,
its argument has period $\frac{2\pi}{b_0}$; therefore the arctangent
term increases by adding $\pi$ when $t \mapsto t + \frac{2 \pi}{b_0}$.
In particular, using the considerations of the previous paragraph as
well, we have $g \left( \frac{2 \pi k}{b_0} \right) = g(0) \exp \left(
  \frac{4 \pi k}{\sqrt{n}} g(0)^{-1} \overline{A} \right) $ for any $k \in \NN$.

To complete the proof of non-surjectivity, note that at each $x \in
M$, $g(t, x) = a(t, x) g(0, x) \exp(b(t, x) g(0, x)^{-1}
\overline{A}(x))$, where $a$ and $b$ are real-valued functions.  
Furthermore, from \eqref{hSoln1} (and the nonnegativity of $p$ in that equation),
it follows that $b(\, \cdot \, , x)$ is monotonically nondecreasing
for each $x \in M$.  From the last paragraph, we also see that $b
\left( \frac{2\pi k}{b_0}, x \right) = \frac{4 \pi k}{\sqrt{n}}$ for
any $x \in M$ and $k \in \NN$.  Since also $\lVert \overline{A}(x)
\rVert = 1$ for all $x \in M$, we see that it is impossible for the
image of $\exp_g$ to contain, for example, any metrics of the form $R
g(0) \exp(S)$, where $R: M \rightarrow \RR_{>0}$ and $S$ is any $(1,
1)$-tensor with $\sqrt{\tr(S^2(x))} < \frac{4 \pi k_0}{\sqrt{n}}$ and
$\sqrt{\tr(S^2(y))} > \frac{4 \pi k_0}{\sqrt{n}}$ for some points $x,
y \in M$ and number $k_0 \in \NN$.

\section*{Appendix}
\label{sec:appendix}

Here, we present the proofs of Propositions \ref{VolUpperBd} and
\ref{VolUpperBdpNE1}.

\begin{proof}[Proof of Proposition \ref{VolUpperBd}]
  This proposition is analogous to \cite[Proposition 4.1]{Cl3}, so we will
  follow that proof, with modifications to compensate for
  the conformal factor $V^{-p}$ of $g_p$.

  For each $k \in \NN$ and $s \in (0,1]$, we define three families of
  metrics as follows.  The set $E$ is open, and we may choose closed sets
  $F_k \subseteq E$ such that $\Vol(E, g) - \Vol(F_k, g) \leq 1/k$.
  (This is possible because the Lebesgue measure is regular.)  Let
  $f_{k,s} \in C^\infty(M)$ be functions with the following
  properties:
  \begin{enumerate}
  \item $f_{k,s}(x) = s$ if $x \in F_k$,
  \item $f_{k,s}(x) = 1$ if $x \not\in E$ and
  \item $s \leq f_{k,s}(x) \leq 1$ for all $x \in M$.
  \end{enumerate}
  Now, for $t \in [0,1]$, define
  \begin{equation*}
    \begin{gathered}
      \hat{g}^{k,s}(t) := ((1-t) + t f_{k,s}) g, \qquad
      \bar{g}^{k,s}(t) := f_{k,s} ((1-t) g + t h), \\
      \tilde{g}^{k,s}(t) := ((1-t) + t f_{k,s}) h.
    \end{gathered}
  \end{equation*}
  We view these as paths in $t$ depending on the family parameter
  $s$.  Furthermore, we define a concatenated path
  \begin{equation*}
    g^{k,s} := \hat{g}^{k,s} * \bar{g}^{k,s} * (\tilde{g}^{k,s})^{-1},
  \end{equation*}
  where of course the inverse means we run through the path backwards.
  Then $g^{k,s}(0) = g$ and $g^{k,s}(1) = h$ for
  all $s$.

  We now investigate the lengths of each piece of $g^{k,s}$
  separately, starting with that of $\hat{g}^{k,s}$.  We first compute
  \begin{equation}\label{HatgEst}
    \begin{aligned}
      L(\hat{g}^{k,s})       &= \integral{0}{1}{\left( V_{\hat{g}^{k,s}(t)}^{-p} \integral{M}{}{\tr_{((1 - t) + t
              f_{k,s}) g} \left( ((f_{k,s} - 1)g)^2 \right)
                      }{dV_{\hat{g}^{k,s}(t)}} \right)^{1/2}}{d t} \\
      &= \integral{0}{1}{\left( V_{\hat{g}^{k,s}(t)}^{-p} \integral{E}{}{((1 - t) + t
            f_{k,s})^{\frac{n}{2} - 2} \tr_{g} \left( ((1 -
              f_{k,s})g)^2 \right)
                      }{dV_{g}}
        \right)^{1/2}}{d t}.
    \end{aligned}
  \end{equation}
  Note that in the last line, we only integrate over $E$, since $1 -
  f_{k,s} \equiv 0$ on $M \setminus E$.  Note also that since,
  additionally, $f_{k,s} \leq 1$, we have $\Vol(M \setminus E, g)
  \leq V_{\hat{g}^{k,s}(t)} \leq V_{g}$.  Furthermore, since $s > 0$,
    we have $(1 - f_{k,s})^2 \leq (1 - s)^2 < 1$, from
  which
  \begin{equation*}
    L(\hat{g}^{k,s}) < V_{p,g}^{E} \integral{0}{1}{\left( n \integral{E}{}{((1
          - t) + t f_{k,s})^{\frac{n}{2} - 2}}{dV_{g}}
      \right)^{1/2}}{d t}.
  \end{equation*}
  Now, to estimate this, we note that for $n \geq 4$, $\frac{n}{2} - 2
  \geq 0$ and therefore $f_{k,s} \leq 1$ implies that
  \begin{equation}\label{eq:72}
    L(\hat{g}_t^{k,s}) < V_{p,g}^{E} \sqrt{n \Vol(E, g)}.
  \end{equation}
  For $n \leq 3$, $\frac{n}{2} - 2 < 0$ and therefore one can
  compute that $f_{k,s} \geq s > 0$ implies
  \begin{equation*}
    ((1 - t) + t f_{k,s})^{\frac{n}{2} - 2} < (1 - t)^{\frac{n}{2}
      - 2}.
  \end{equation*}
  In this case, then,
  \begin{equation}\label{eq:71}
    L(\hat{g}_t^{k,s}) < V_{p,g}^{E} \sqrt{n \Vol(E, g)} \integral{0}{1}{(1 -
      t)^{\frac{n}{4} - 1}}{dt} = V_{p,g}^{E} \sqrt{\Vol(E, g)} \cdot \frac{4}{\sqrt{n}}.
  \end{equation}
  Putting together \eqref{eq:72} and \eqref{eq:71} therefore gives
  \begin{equation}\label{eq:75}
    L(\hat{g}^{k,s}_t) \leq C(n) V_{p,g}^{E} \sqrt{\Vol(E, g)},
  \end{equation}
  where $C(n)$ is a constant depending only on $n$.

  In exact analogy, we can show that the same estimate holds for
  $\tilde{g}^{k,s}$ with $h$ in place of $g$.
  
  Next, we look at the second piece of $g^{k,s}$.  Here we have,
  using that $h - g = 0$ on $M \setminus E$,
  \begin{equation*}
    \begin{aligned}
      \abs{\bar{g}^{k,s}_t}_s^2 &= V_{f_{k,s} ((1 - t) g + t
            h)}^{-p} \integral{M}{}{
        \tr_{f_{k,s} ((1 - t) g + t h)} \left( (f_{k,s} (h -
          g))^2\right)}{dV_{f_{k,s} ((1 - t) g + t
            h)}} \\
      &= V_{f_{k,s} ((1 - t) g + t
            h)}^{-p} \integral{E}{}{f_{k,s}^{n/2} \tr_{(1 - t) g + t h} \left(
          (h - g)^2 \right)}{dV_{(1 - t) g + t
            h}}.
    \end{aligned}
  \end{equation*}
  Since $f_{k,s}(x) \leq 1$ for all $x \in M$
      it follows that $
      V_{f_{k,s} ((1 - t) g + t h)} \leq V_{(1-t) g + t h}$.
  Additionally, since $f_{k,s}(x) = s > 0$ for all $x \in M$ and
  $f_{k,s} \equiv 1$ on $E$, we have
  $V_{f_{k,s} ((1 - t) g + t h)} > \Vol(M \setminus E, (1 - t) g
  + t h)$.  Thus, defining (for $A \subseteq M$ measurable)
  \begin{equation*}
    W_{p,g,h}^A := \max
    \left\{
      V_{(1-t) g + t h}^{-p}, \Vol(M \setminus A, (1-t) g + t h)^{-p} \,
      : \, t \in [0,1]
    \right\},
  \end{equation*}
  the above estimate becomes
  \begin{equation}\label{BargEst}
    \begin{aligned}
      \abs{\bar{g}^{k,s}_t}_E^2 \leq s^{n/2} W_{p,g, h}^{E}
      \integral{F_k}{}{\tr_{(1 - t) g + t h} \left( (h - g)^2
        \right)}{&dV_{(1 - t) g + t
            h}} \\
                + W_{p,g, h}^{E} \integral{E \setminus F_k}{}{\tr_{(1 - t) g + t h}
        \left( (h - g)^2 \right)}{&dV_{(1 - t) g + t h}}.
    \end{aligned}
  \end{equation}
  For each fixed $t$, one can see that the first term in the above
  goes to
                zero as $k \rightarrow \infty$ followed by $s \rightarrow 0$.
  Additionally, by our assumption on the sets $F_k$, the second term
  in \eqref{BargEst} goes to zero as $k \rightarrow \infty$ for each
  fixed $t$ (it does not depend on $s$ at all).  Since $t$ only ranges
  over the compact interval $[0,1]$ and all terms in the integrals
  depend smoothly on $t$, both of these convergences are uniform in
  $t$.  From this, 
  \begin{equation}\label{eq:135}
    \lim_{s \rightarrow 0} \lim_{k \rightarrow \infty}
    L(\bar{g}^{k,s}) = 0.
  \end{equation}
  Combining \eqref{eq:75}, its analogue for $\tilde{g}^{k,s}$, and
  \eqref{eq:135}, together with 
$\lim_{k
    \rightarrow \infty} V_{p,g}^{E} = V_{p,g}^E$ (and similarly for $V_{p,h}^{E}$), gives
  the desired estimate.
\end{proof}

\begin{proof}[Proof of Proposition \ref{VolUpperBdpNE1}]
  The proof is divided into three cases: $p \leq 0$, $0 < p < 1$, and $p
  > 1$.

  First, let $p \leq 0$.  In this case, the result follows from
  Proposition \ref{VolUpperBd}, since $\max \{ V_g^{-p/2}, \Vol(M
  \setminus E, g)^{-p/2} \} = V_g^{-p/2}$, and similarly for $h$.

  Now, let $0 < p < 1$.  We use the notation of the proof of
  Proposition \ref{VolUpperBd}, and continue from \eqref{HatgEst}.
  Note that, since $p > 0$ and $f_{k,s} \geq s$,
  \begin{equation}\label{VhatgEst}
    V_{\hat{g}^{k,s}(t)}^{-p} =
    \left(
      \integral{M}{}{((1-t) + t f_{k,s})^{n/2}}{dV_g}
    \right)^{-p} \leq (1 - (1-s) t)^{-p n / 2} V_g^{-p}.
  \end{equation}

  Assume $n \leq 3$.  Then $\frac{n}{2} - 2 < 0$, and therefore
  \begin{equation}\label{fksEst}
    ((1-t) + t f_{k,s})^{\frac{n}{2} - 2} \leq (1 - (1-s) t)^{\frac{n}{2} - 2}.
  \end{equation}
  Also, $(1 - f_{k,s}) \leq (1-s)$, so combining this with
  \eqref{VhatgEst} and \eqref{fksEst} allows us to transform
  \eqref{HatgEst} into the estimate (with $\tau := (1-s) t$)
  \begin{equation}\label{LhatgEstnLT3}
    \begin{aligned}
      L(\hat{g}^{k,s}) &\leq \integral{0}{1}{\left( V_g^{-p}
          \integral{E}{}{(1 - (1-s) t)^{\frac{(1-p)n}{2} - 2} \tr_{g}
            \left( ((1 - s)g)^2 \right)
                      }{dV_{g}}
        \right)^{1/2}}{d t}\\
      &= V_g^{-p /2} \sqrt{n \Vol(E, g)}
      \integral{0}{1}{(1 - (1-s) t)^{\frac{(1-p)n}{4} - 1}}{ (1-s) dt} \\
      &= V_g^{-p /2} \sqrt{n \Vol(E, g)} \integral{0}{1-s}{(1 -
        \tau)^{\frac{(1-p)n}{4} - 1}}{d \tau} \\
      &\leq V_g^{-p /2} \sqrt{n \Vol(E, g)} \integral{0}{1}{(1 -
        \tau)^{\frac{(1-p)n}{4} - 1}}{d \tau} \\
      &\leq C(p, n) V_g^{-p /2} \sqrt{\Vol(E, g)},
    \end{aligned}
  \end{equation}
  where the last line follows since $p < 1$ and $n \leq 3$.
            
  Now, assume $n \geq 4$.  On $F_k$, we have $f_{k,s} \equiv s$, so we
  may carry out the same estimate as above (which, at least on $F_k$, does not depend
  on \eqref{fksEst}) to obtain
  \begin{equation}\label{LhatgEstnGT4}
    \begin{aligned}
      L(\hat{g}^{k,s}) &\leq C(p, n) V_g^{-p /2} \sqrt{\Vol(F_k, g)}
    \\
    &\qquad {} + \integral{0}{1}{\left( V_{\hat{g}^{k,s}(t)}^{-p}
        \integral{E \setminus F_k}{}{((1 - t) + t
          f_{k,s})^{\frac{n}{2} - 2} \tr_{g} \left( ((1 -
            f_{k,s})g)^2 \right)
        }{dV_{g}}
      \right)^{1/2}}{d t}.
    \end{aligned}
  \end{equation}
  Since, in this case, $\frac{n}{2} - 2 \geq 0$, the fact that
  $f_{k,s} \leq 1$ implies $((1 - t) + t f_{k,s})^{\frac{n}{2} - 2}
  \leq 1$.  Also, since $f_{k,s} > 0$, we have that $1 - f_{k,s} < 1$.
  Using these facts, together with \eqref{VhatgEst}, The second term
  on the right-hand side of the above expression can be estimated from
  above by
  \begin{equation*}
    V_g^{-p/2} \sqrt{n \Vol(E \setminus F_k, g)} \integral{0}{1}{(1 -
      (1-s) t)^{-pn/2}}{d t}.
  \end{equation*}
  The value of the integral in the above is finite for each fixed $s >
  0$ and does not depend on $k$.  Furthermore, by our assumptions on
  the sets $E$ and $F_k$, the above expression goes to zero as $k
  \rightarrow \infty$.  Combining this fact with \eqref{LhatgEstnLT3}
  and \eqref{LhatgEstnGT4} shows that for any $n$,
  \begin{equation*}
    \lim_{k \rightarrow \infty} L(\hat{g}^{k,s}) \leq C(p, n)
    V_g^{-p/2} \sqrt{\Vol(E, g)}. 
  \end{equation*}
  A similar estimate holds for $L(\tilde{g}^{k,s})$, and we can show
  exactly as in the proof of Proposition \ref{VolUpperBd} that
  $\lim_{s \rightarrow 0} \lim_{k \rightarrow \infty} L(\bar{g}^{k,s})
  = 0$.  This completes the proof for $0 < p < 1$.

  Finally, let $p > 1$.  In this case, we use the isometry $F$ from
  Proposition \ref{IsometryProp} and the result for $p < 1$ to see
  \begin{equation*}
    \begin{aligned}
      d_p(g, h)       \leq C(p, n) \cdot \left(
        V_{F(g)}^{\frac{p-2}{2}} \sqrt{\Vol(E, F(g))} + V_{F(h)}^{\frac{p-2}{2}} \sqrt{\Vol(E,F(h))}
      \right).
    \end{aligned}
  \end{equation*}
  Recalling that $V_{F(g)} = V_g^{-1}$ and noting that $\Vol(E, F(g))
  = V^{-2} \Vol(E, g)$ (and similarly for $F(h)$) then leads to the
  result.
\end{proof}

\bigskip

\end{document}